\def\cl{\centerline}
\def\al{\alpha}
\def\vs{\vspace*}
\def\Z{\mathbb{Z}}
\def\C{\mathbb{C}}
\def\pa{\partial}
\def\QED{\hfill$\Box$}
\def\pr{\prime}
\def\ni{\noindent}
\numberwithin{equation}{section}
\newtheorem{theo}{Theorem}[section]
\newtheorem{defi}[theo]{Definition}
\newtheorem{lemm}[theo]{Lemma}
\newtheorem{prop}[theo]{Proposition}
\newtheorem{clai}{Claim}
\newtheorem{case}{Case}
\begin{document}
\begin{CJK*}{GBK}{song}

\begin{center}
{\bf\large Loop super-Virasoro Lie conformal superalgebra}
\end{center}

\cl{Xiansheng Dai$^{1,\, 2}$, Jianzhi Han$^{2,\, \dag}$.}\footnote{$^\dag$\ Corresponding author.}

\cl{\small $^{1}$ School of Mathematics Sciences, Guizhou Normal University, Guiyang
550001, China}
\cl{\small  $^{2}$ Shcool of Mathematical Sciences, Tongji University, Shanghai
200092, China}
\cl{\small daisheng158@126.com, jzhan@tongji.edu.cn}
\vs{8pt}

{\small
\parskip .005 truein
\baselineskip 3pt \lineskip 3pt

\noindent{{\bf Abstract:}  The loop super-Virasoro
conformal superalgebra $\mathfrak{cls}$ associated with the loop super-Virasoro algebra is constructed in the present paper. The conformal superderivation algebra of $\mathfrak{cls}$ is  completely determined,  which is   shown to  consist of inner superderivations. And nontrivial free  and free $\Z$-graded  $\mathfrak{cls}$-modules of rank two   are classified. We also give a classification of irreducible free $\mathfrak{cls}$-modules of rank two and all irreducible submodules of each free $\Z$-graded $\mathfrak{cls}$-module of rank two.
\vs{5pt}

\ni{\bf Key words:} loop super-Virasoro algebra, Lie conformal superalgebras,
conformal superderivations, conformal modules.}

\ni{\it Mathematics Subject Classification (2010):} 17B15, 17B40, 17B65, 17B68.}
\parskip .001 truein\baselineskip 6pt \lineskip 6pt
\vspace{5mm}
\section{Introduction}
\setcounter{section}{1}\setcounter{equation}{0}
Lie conformal (super)algebras, originally introduced by Kac in \cite{K1,K2}, encode an axiomatic
description for the singular part of the operator product expansion  of chiral fields in
two-dimensional conformal field theory. They are very closely related to vertex algebras (cf. \cite{B,R}) by the same way as Lie algebras correspond to
their universal enveloping algebras. On the other hand, the theory of Lie conformal (super)algebras give us powerful tools for the
study of infinite-dimensional Lie (super)algebras and associative algebras satisfying the locality property described in \cite{K}.
The conformal (super)algebras have drawn much attention in  branches of   physics and  mathematics since the introduction.
The structure theory, representation theory  and cohomology theory of finite (i.e., finitely generated as $\C[\pa]$-modules)
Lie conformal algebras have been well developed (cf.  \cite{BK, CK,CKW,DK}), and finite simple Lie conformal
superalgebras were  classified  in \cite{FK} and their representation theories were developed in \cite{BKL1,BKL2,KO}.

The object investigated in this paper is a Lie conformal superalgebra   closed related to the loop super-Virasoro algebra $\mathfrak{sl}$ whose structures were studied in \cite{DHS}. It is defined as
a infinite-dimensional Lie superalgebra with a basis $\{L_{\al,i}, G_{\mu,j}\mid \al,i,j\in\Z, \mu\in \frac12+\Z\}$
satisfying the following  commutation relations:
\begin{eqnarray}\label{p2.1}
[L_{\al,i},L_{\beta,j}]=(\al-\beta)L_{\al+\beta,i+j},\
[L_{\al,i},G_{\mu,j}]=(\frac{\al}{2}-\mu)G_{\al+\mu,i+j},\
[G_{\mu,i},G_{\nu,j}]=2L_{\mu+\nu,i+j},
\end{eqnarray} the  even and odd parts of which are $\mathfrak{sl}^{\bar{0}}={\rm span}\{L_{\al,i}\mid\al,i\in\Z\}$ and
 $\mathfrak{sl}^{\bar{1}}={\rm span}\{G_{\mu,j}\mid j\in\Z,\mu\in \frac12+\Z\}$, respectively.
Clearly, $\mathfrak{sl}^{\bar{0}}$ is just the loop Virasoro algebra (cf. \cite{GLZ}) and $ \mathfrak{sl}^{\bar{1}}$ is its module. Hence, the loop super-Virasoro algebra  can be seen as a super extension of the loop Virasoro algebra.

Motivated by the idea from \cite{K2} we associate a Lie conformal superalgebra with the loop super-Virasoro algebra.
It is called the \textit{loop super-Virasoro conformal superalgebra}, denoted by $\mathfrak{cls}$, which is a $\C[\pa]$-module
$\mathfrak{cls}=(\bigoplus_{i\in\Z}\C[\pa]L_i)\oplus(\bigoplus_{i\in\Z}\C[\pa]G_i)$ with a $\C[\partial]$-basis $\{L_i,G_i\mid i\in\Z\}$ satisfying the following  $\lambda$-brackets:
\begin{eqnarray}
&&[L_{i\ \lambda} L_j]=(\partial+2\lambda) L_{i+j},\
[L_{i\ \lambda} G_j]=(\partial+\frac32\lambda) G_{i+j},\label{d1.2}\\ \
&&[G_{i\ \lambda} L_j]=(\frac12\partial+\frac32\lambda) G_{i+j}, \
[G_{i\ \lambda} G_j]=2L_{i+j},  \forall i,j\in\Z. \label{d1.3}
\end{eqnarray} Note that this  is an infinite simple Lie conformal superalgebra,  containing   the loop Virasoro Lie conformal algebra
$\mathfrak{clv}=\bigoplus_{i\in\Z}\C[\pa]L_i$ (see \cite{WCY}) as its subalgebra.
As pointed out previously,  the theory of finite simple Lie conformal (super)algebras were well developed, but so far there  is no systematic theory for  the infinite case. So it is interesting and necessary  to develop the theory for  infinite simple Lie conformal superalgebras. This is one of our motivations for studying the loop super-Virasoro conformal superalgebra. We shall study the superderivation algebra of $\mathfrak{cls}$ and free ($\Z$-graded) $\mathfrak{cls}$-modules of rank $\leq2$. One interesting aspect is that free ($\Z$-graded) $\mathfrak{cls}$-modules of rank 1 are trivial, which is totally different from the loop Virasoro Lie conformal algebra case (all $V_{a,b}$ and $V_{A,b}$ are its nontrivial conformal modules of rank one); and the other interesting aspect is  that the even or odd  part of a $\Z$-grade free $\mathfrak{cls}$-module of rank two has the form $V_{A,b}$ if and only if  the other part is $A_{\frac12,b}$.  We remark that an important strategy frequently used in the present paper is to pass  modules over $\mathfrak{cls}$  to modules over $\mathfrak{clv}$.

This paper is organized as follows. In Sect. 2, we collect some facts and notions related
to Lie conformal superalgebras. In Sect. 3, we determinate conformal superderivations of $\mathfrak{cls}$.
The Sect.  4 is devoted to giving the classification of all nontrivial free $\mathfrak{cls}$-modules of rank less than  two. We also determine the irreducibility of these modules and therefore classify all inequivalent irreducible free  $\mathfrak{cls}$-modules of rank two.
And  free $\Z$-graded $\mathfrak{cls}$-modules of rank less than or equal to two  are  classified in  Sect. 5. Moreover, all irreducible submodules of free $\Z$-graded $\mathfrak{cls}$-modules of rank two are completely determined.

 Throughout this paper, all vector spaces are assumed to be over the complex field $\C$ and all linear maps are $\C$-linear.  The main results of this paper are summarized in Theorems \ref{main3}, \ref{freeofranktwo}, \ref{main4-2} and \ref{main5}.

\section{Preliminaries}
\setcounter{section}{2}\setcounter{equation}{0}
In this section, we recall some notions related to Lie conformal superalgebras and conformal modules from\cite{DK,K1,K2}.

We say that a   vector space $U$ is $\Z_2$-graded if $U=U^{\bar0}\oplus U^{\bar1}$, and  $x\in U^{\bar i}$ is called $\Z_2$-homogenous and write $|x|= i$. For any  two $\Z_2$-graded vector spaces $U$ and  $V$, a linear map $f: U\rightarrow V$ is called homogenous of degree $\bar i\in\Z_2$ if $f(U^{\bar j})\subseteq V^{\overline{i+j}}$ for all $\bar j\in\Z_2$.
\begin{defi}
A Lie conformal superalgebra  is a $\Z_2$-graded $\C[\partial]$-module $\mathcal{A}$ endowed with a linear map
$\mathcal{A}\otimes\mathcal{A}\rightarrow \C[\lambda]\otimes \mathcal{A}, a\otimes b\mapsto [a_\lambda b]$, called the $\lambda$-bracket, and
 satisfying the following axioms $(a, b, c\in \mathcal{A}):$
\begin{eqnarray}
&&[(\partial a)_\lambda b]=-\lambda[a_\lambda b],\quad [a_\lambda b]=-(-1)^{|a||b|}[b_{-\lambda-\partial} a],\label{m2.2}\\
&&[a_\lambda [b_\mu c]]=[[a_\lambda b]_{\lambda+\mu}c]+(-1)^{|a||b|}[b_\mu [a_\lambda c]].
\end{eqnarray}
\end{defi}

It follows from the axioms in (\ref{m2.2}) that
\begin{eqnarray*}
[(f(\partial)a)_\lambda b]=f(-\lambda)[a_\lambda b]\ \mbox{and}\
[a_\lambda (f(\partial)b)]=f(\partial+\lambda)[a_\lambda b],\quad \forall f(\pa)\in\C[\pa].
\end{eqnarray*}

\begin{defi}
A conformal module  over a Lie conformal superalgebra $\mathcal{A}$ or an $\mathcal A$-module  is a $\Z_2$-graded $\C[\partial]$-module  $V$ endowed with a $\lambda$-action
$\mathcal{A}\otimes V\rightarrow \C[\lambda]\otimes V, a\otimes v\mapsto a_\lambda v$, and satisfying the following axioms $(a, b \in \mathcal{A}, v\in V):$
\begin{eqnarray*}&&(\partial a)_\lambda v=-\lambda a_\lambda v,\quad a_\lambda (\partial v)=(\partial +\lambda)a_\lambda v,\\
&&[a_\lambda b]_{\lambda+\mu}v=a_\lambda (b_\mu v)-(-1)^{|a||b|}b_\mu (a_\lambda v).
\end{eqnarray*}
\end{defi}

The rank of an $\mathcal A$-module $V$ is defined to be the rank of $V$ as $\C[\partial]$-module.
\begin{defi}\label{intermod}Let $\mathcal A$ be a Lie conformal superalgebra.
\begin{itemize}\parskip-3pt\item[{\rm (1)}]  $\mathcal{A}$ is called {\it $\Z$-graded} if $\mathcal{A}=\oplus_{i\in \Z}{\mathcal{A}}^i$,  each ${\mathcal{A}}^i$ is a $\C[\partial]$-submodule
 and
$[{\mathcal{A}}^i\,{}_\lambda\, {\mathcal{A}}^j]\subseteq \mathcal{A}^{i+j}[\lambda]$ for any $i,j\in\Z$.

\item[{\rm (2)}] A  conformal module $V$ over $\mathcal A$ is {\it $\Z$-graded} if $V=\oplus_{i\in\Z}V_i$, each $V_i$ is a $\C[\partial]$-submodule and $\mathcal (\mathcal A^i)_\lambda V_j\subseteq V_{i+j}[\lambda]$ for any $i,j\in\Z$.
If each $V_i$ is a free $\C[\partial]$-module of rank $n$, then $V$ is called a free $\Z$-graded $\mathcal A$-module of rank $n$.
\end{itemize}
\end{defi}

Note that the loop super-Virasoro conformal superalgebra $\mathfrak{cls}=(\mathfrak{cls})^{\bar0}\oplus(\mathfrak{cls})^{\bar1}$ is $\Z_2$-graded with $(\mathfrak{cls})^{\bar 0}=\oplus_{i\in\Z}\C[\partial]L_i$ and $(\mathfrak{cls})^{\bar1}=\oplus_{i\in\Z}\C[\partial]G_i$ such that $[{(\mathfrak{cls})^{\alpha}} \ _\lambda (\mathfrak{cls})^{\beta}]\subseteq (\mathfrak{cls})^{\alpha+\beta}[\lambda]$ for any $\alpha,\beta\in\Z_2$, and on the other hand $\mathfrak{cls}=\oplus_{i\in\Z}(\mathfrak{cls})_i$ is also $\Z$-graded with  $(\mathfrak{cls})_i=\C[\partial]L_i\oplus\C[\partial]G_i$ for each $i$ in the sense of Definition \ref{intermod}.

\section{Conformal superderivations }
\setcounter{theo}{0}
 A homogenous linear map $D_\lambda:\mathcal{A}\rightarrow \mathcal{A}[\lambda]$ of degree $\bar i\in\Z_2$
is called a\textit{ homogeneous conformal superderivation of degree} $\bar{i}$ if the following conditions hold:
\begin{eqnarray*}
D_\lambda(\partial a)=(\partial+\lambda)D_\lambda a,\
D_\lambda[a_\mu b]=[[D_\lambda a]_{\lambda+\mu} b]+(-1)^{i|a|}[a_\mu [D_\lambda b]],\quad \forall a,b\in \mathcal A.
\end{eqnarray*} And we write $D$ instead of $D_\lambda$ for simplicity. Denote the set of all conformal superderivations of degree $\alpha\in\Z_2$   by ${\rm CDer}^\alpha(\mathcal{A})$. Then we call ${\rm CDer(\mathcal{A})}={\rm CDer^{\bar0}(\mathcal{A})}\oplus {\rm CDer^{\bar1}(\mathcal{A})}$ the conformal superdirivation algebra of $\mathcal A$ and each element of ${\rm CDer(\mathcal A)}$  a superderivation of $\mathcal A$.
For any $a\in \mathcal{A}$, one can see easily that the linear map $({\rm ad}_a)_\lambda: \mathcal{A}\rightarrow \mathcal{A}[\lambda]$ given by
$({\rm ad}_a)_\lambda b=[a_\lambda b]$ for   $b\in \mathcal{A}$   is  a conformal superderivation, which is called an \textit{inner conformal  superderivation} of $\mathcal A$. Denote the set of all inner conformal superdirivations of $\mathcal A$ by ${\rm CInn(\mathcal A)}$.

Now we are ready to give the main result of this section, which establishes the equality between the two sets ${\rm CDer}(\mathfrak{cls})$ and ${\rm CInn}(\mathfrak{cls})$.

\begin{theo}\label{main3} Every conformal superderivation of $\mathfrak{cls}$ is inner, i.e.,
{\rm CDer}$(\mathfrak{cls})={\rm CInn}(\mathfrak{cls})$.
\end{theo}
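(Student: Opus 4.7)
Since ${\rm CDer}(\mathfrak{cls}) = {\rm CDer}^{\bar0}(\mathfrak{cls}) \oplus {\rm CDer}^{\bar1}(\mathfrak{cls})$, it suffices to show that every homogeneous superderivation $D$ of degree $\bar i$ is inner, i.e.\ equals ${\rm ad}_a$ for some $a \in (\mathfrak{cls})^{\bar i}$. The common strategy is to expand $D$ in the $\C[\partial]$-basis $\{L_k, G_k\}$, apply the superderivation rule to the brackets involving the ``zeroth'' generators $L_0$ and $G_0$, and then exploit $\mu$-independence of one side of the resulting identities to force the coefficient polynomials into the shape of an inner derivation.

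\textbf{Even case.} Since $(\mathfrak{cls})^{\bar0}=\mathfrak{clv}$ is closed under the $\lambda$-bracket, $D^{\bar 0}$ restricts to a conformal derivation of the loop Virasoro Lie conformal algebra $\mathfrak{clv}$. Invoking the fact from \cite{WCY} that every conformal derivation of $\mathfrak{clv}$ is inner, I choose $a \in \mathfrak{clv}$ with $D^{\bar 0}|_{\mathfrak{clv}} = {\rm ad}_a|_{\mathfrak{clv}}$ and replace $D^{\bar 0}$ by $D^{\bar 0} - {\rm ad}_a$, so $D^{\bar 0}(L_i) = 0$ for every $i$. Writing $D^{\bar 0}_\lambda G_j = \sum_k P_{j,k}(\partial, \lambda)G_k$ and applying the derivation rule to $[L_0{}_\mu G_j] = (\partial + \tfrac32\mu)G_j$ yields, component by component,
\begin{equation*}
(\partial + \lambda + \tfrac32\mu)\, P_{j,k}(\partial, \lambda) = (\partial + \tfrac32\mu)\, P_{j,k}(\partial + \mu, \lambda).
\end{equation*}
Extracting the $\mu^0$ coefficient gives $\lambda P_{j,k} = 0$, so $P_{j,k} = 0$ and thus $D^{\bar 0} = {\rm ad}_a$.

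\textbf{Odd case.} Write $D^{\bar 1}_\lambda L_i = \sum_k Q_{i,k}(\partial,\lambda) G_k$ and $D^{\bar 1}_\lambda G_i = \sum_k R_{i,k}(\partial,\lambda) L_k$. Any odd inner derivation ${\rm ad}_a$ with $a = \sum_k f_k(\partial)G_k$ produces the matrix entries $R_{i,k+i}(\partial,\lambda) = 2f_k(-\lambda)$ and $Q_{i,k+i}(\partial,\lambda) = f_k(-\lambda)(\tfrac12\partial + \tfrac32\lambda)$, so the aim is to exhibit such a family $\{f_k\}$. The key identity comes from applying the superderivation rule (with sign $(-1)^{|G_0|\cdot 1} = -1$) to $[G_0{}_\mu G_i] = 2L_i$:
\begin{equation*}
2Q_{i,m}(\partial,\lambda) = R_{0,m-i}(-\lambda-\mu,\lambda)(\partial + \tfrac32(\lambda+\mu)) - R_{i,m}(\partial+\mu,\lambda)(\tfrac12\partial + \tfrac32\mu).
\end{equation*}
Specializing to $i=0$ and using the $\mu$-independence of the left side, comparison of successive $\mu$-coefficients first bounds $\deg_\partial R_{0,m} \leq 1$ and then forces the linear $\partial$-coefficient to vanish, so $R_{0,m} \in \C[\lambda]$; the remaining $\mu$-free part gives $Q_{0,m}(\partial,\lambda) = \tfrac12 R_{0,m}(\lambda)(\tfrac12\partial + \tfrac32\lambda)$. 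Feeding the $\partial$-independence of $R_{0,m-i}$ back into the general-$i$ identity, a parallel $\mu$-coefficient analysis yields $R_{i,m}(\partial,\lambda) = R_{0,m-i}(\lambda)$ and the corresponding formula for $Q_{i,m}$. Setting $f_k(-\lambda) := \tfrac12 R_{0,k}(\lambda)$ and $a := \sum_k f_k(\partial)G_k$---a finite sum, since $D^{\bar 1}_\lambda L_0$ has finite support in the $G_k$'s---one verifies $D^{\bar 1} = {\rm ad}_a$ directly on each $L_i, G_i$.

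\textbf{Main obstacle.} The odd case carries the bulk of the work: teasing $\partial$-independence of the $R_{i,m}$ out of a single $\mu$-parametric identity requires a delicate two-step reduction (first bound the $\partial$-degree by $1$, then kill the linear coefficient). One also has to confirm that the resulting $D^{\bar 1}$ is consistent with the other superderivation identities coming from $[L_0{}_\mu L_i]$, $[L_0{}_\mu G_i]$ and $[G_0{}_\mu L_i]$; this is automatic once $D^{\bar 1}$ agrees with the inner derivation ${\rm ad}_a$ on the generators $L_0$ and $G_0$, but the bookkeeping of indices and of support finiteness is the principal technical chore.
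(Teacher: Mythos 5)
Your argument is correct, but it is organized rather differently from the paper's. The paper first splits $D$ into its $\Z$-graded components $D^c$ (with respect to the grading $(\mathfrak{cls})_c=\C[\partial]L_c\oplus\C[\partial]G_c$), so each component is governed by a single family of polynomials $f_i,g_i$; it then handles both parities by applying $D^c$ to $[L_{0\ \mu} L_i]$ and $[L_{0\ \mu} G_i]$ and simply setting $\mu=0$, which expresses every coefficient in terms of $f_0$ (resp.\ $g_0$) divided by $\lambda$; the finiteness of the resulting sum of inner derivations is extracted once at the end from $D(L_0)$. You work only with the $\Z_2$-splitting: the even part is outsourced to the statement ${\rm CDer}(\mathfrak{clv})={\rm CInn}(\mathfrak{clv})$ from \cite{WCY} followed by a two-line argument killing the residual action on the $G_j$'s, and the odd part is seeded from $D(G_0)$ via $[G_{0\ \mu} G_i]=2L_i$ rather than from $D(L_0)$ via the $L_0$-brackets. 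Your route avoids the division by $\lambda$ (the paper's formulas $f_i(\pa,\lambda)=(\pa+2\lambda)f_0(-\lambda,\lambda)/\lambda$ implicitly need $\lambda\mid f_0(-\lambda,\lambda)$, which the paper does not spell out), at the cost of the full $\mu$-coefficient analysis and of relying on the external $\mathfrak{clv}$ result, without which your even case is not self-contained (the fact is available in \cite{WCY}, and the paper's Case 1 re-derives it in a few lines anyway). One bookkeeping remark: your two-step degree reduction is stated slightly inaccurately --- with $N=\deg_\pa R_{0,m}$, the $\mu^{N+1}$-coefficient kills all odd $N\geq 1$ outright, and the $\pa\mu^{N}$-coefficient, which equals $(\tfrac12-\tfrac32 N)$ times the leading coefficient, kills all even $N\geq 2$, so one lands directly at $N=0$ rather than passing through ``$\deg_\pa\leq 1$''; the conclusion $R_{0,m}\in\C[\lambda]$ and everything downstream of it are nevertheless correct.
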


\begin{proof}

 Take any $D\in {\rm CDer}(\mathfrak{cls})$.  For this fixed superderivation $D$ and any $c\in\Z$, define $D^c(x_j)=\pi_{c+j}D(x_j)$ for any $j\in\Z$ and $x_j\in\mathcal {(CL)}_j$, where $\pi_c$ is the natural projection from
$\C[\lambda]\otimes \mathfrak{cls}$ to $\C[\lambda]\otimes \mathfrak {(cls)}_c$. Then it is easy to check that $D^c$ is a conformal superderivation of $\mathfrak{cls}$.

We assert that each $D^c$ is inner. For this, we only need to consider the case that $D^c$ is $\Z_2$-homogenous.

\begin{case} $D^c\in {\rm CDer}^{\bar{0}}(\mathfrak{cls})$.
\end{case}
In this case, assume that $$D_\lambda^c(L_i)=f_i(\partial,\lambda)L_{i+c}\quad {\rm and}\quad  D_\lambda^c(G_i)=g_i(\partial,\lambda)G_{i+c}$$for some $f(\partial, \lambda), g_i(\partial, \lambda)\in\C[\partial,\lambda]$.

Applying $D^c_{\lambda}$ to $[L_{0\ \mu} L_i]=(\partial+2\mu) L_{i}$ and comparing the coefficients of $L_{i+c}$ give
\begin{equation*}
(\partial+\lambda+2\mu)f_i(\partial,\lambda)=(\partial +2\lambda+2\mu)f_0(-\lambda-\mu,\lambda)+(\partial+2\mu)f_i(\partial+\mu,\lambda)
\end{equation*}
Setting $\mu=0$ in the formula above, we get
\begin{equation*}
f_i(\partial,i)=(\partial+2\lambda)\frac{f_0(-\lambda,\lambda)}\lambda.
\end{equation*}
Similarly, it follows from $[L_{0\ \mu} G_j]=(\partial+\frac32\mu) G_{j}$ that
\begin{equation}\label{m4.1}
(\partial+\lambda+\frac32\mu)g_{j}(\partial,\lambda)=(\partial+\frac{3(\lambda+\mu)}2)f_0(-\lambda-\mu,\lambda)
+(\partial+\frac{3\mu}2)g_j(\partial+\mu,\lambda).
\end{equation}
Setting $\mu=0$  in (\ref{m4.1}), one has
\begin{equation*}
g_j(\partial,\lambda)=(\partial+\frac{3\lambda}2)\frac{f_0(-\lambda,\lambda)}{\lambda}.
\end{equation*}
Thus, $D^c_\lambda={\rm ad}_{\frac{f_0(\partial,-\partial)}{-\partial}L_c}$.

\begin{case} $D^c\in {\rm CDer}^{\bar{1}}(\mathfrak{cls})$.
\end{case}
Assume that $D^c_\lambda(L_i)=g_i(\partial,\lambda)G_{i+c}$, $D^c_\lambda(G_i)=f_i(\partial,\lambda)L_{i+c}$.
It follows from applying   $D^c_{\lambda}$ to $[L_{0\ \mu} L_i]=(\partial+2\mu) L_{i}$ that
\begin{eqnarray*}
(\partial+\lambda+2\mu)g_i(\partial,\lambda)
=g_0(-\lambda-\mu,\lambda)(\frac12\partial+\frac32(\lambda+\mu))G_{i+c}+g_i(\partial+\mu,\lambda)(\partial+\frac{3\mu}2),
\end{eqnarray*}
from which by setting $\mu=0$ one has
\begin{equation*}
g_i(\partial,\lambda)=(\frac12\partial+\frac32\lambda)\frac{g_0(-\lambda,\lambda)}{\lambda}.
\end{equation*}

Using $[{L_0}\ _\mu G_i]=(\partial+\frac32\mu)G_i$, one has
\begin{eqnarray*}
(\partial+\lambda+\frac{3\mu}2)f_i(\partial,\lambda)
=\big(2g_0(-\lambda-\mu,\lambda)-f_i(\partial+\mu,\lambda)(\partial+2\mu)\big).
\end{eqnarray*}
from which by choosing $\mu=0$ gives \begin{equation*}
f_i(\partial,\lambda)=\frac{2g_0(-\lambda,\lambda)}{\lambda}.
\end{equation*}
Whence one can see that
\begin{equation*}
D^c_\lambda={\rm ad}_{\frac{g_0(\partial,-\partial)}{-\partial}G_c}.
\end{equation*}

So in either case, we see that $D^c={\rm ad}_{x_c}$ for some $x_c\in (\mathfrak{cls})_c$, is inner, completing the assertion. Note from the definition of $D^c$ we see that $D=\sum_{c\in\Z}D^c$. In particular,  $$D(L_0)=\sum_{c\in\Z}{\rm ad}_{x_c}(L_0)=\sum_{c\in\Z, x_c\neq 0}{\rm ad}_{x_c}(L_0)=\sum_{c\in\Z, x_c\neq 0}{\rm ad}_{x_c}(L_0),$$  which must be a finite sum by the fact that  $D$ is a linear map from $\oplus_{i\in\Z}(\mathfrak{cls})_i$ to $\oplus_{i\in\Z}(\mathfrak{cls})_i[\lambda]$. Now it follows from the fact  $0\neq {\rm ad}_{y_c}(L_0)\in (\mathfrak{cls})_c$ for any $0\neq y_c\in (\mathfrak{cls})_c$ that all but finitely many $x_c$ are zero, and therefore $\sum_{c\in\Z}x_c\in\mathfrak{cls}$. This implies $D=\sum_{c\in\Z}{\rm ad}_{x_c}={\rm ad}_{\sum_{c\in\Z}x_c}$ is an inner conformal superderivation.
\end{proof}

\section{Free modules of rank $\leq 2$}
\setcounter{case}{0}
Let $V=\C[\pa]x\oplus \C[\pa]y$ be a free $\C[\pa]$-module of rank two with $V^{\bar{0}}=\C[\pa]x$ and $V^{\bar{1}}=\C[\pa]y$. For any $a,b\in\C$ and $c\in\C^*=\C\setminus\{0\}$, on the one hand,   define the  actions of $L_i$ and  $G_i$  on $V$ as follows:
\begin{eqnarray}\label{m4.9}
&&L_i\ _\lambda x=c^i(\pa+a\lambda+b)x,\ L_i\ _\lambda y=c^i(\pa+(a+\frac12)\lambda+b)y,\nonumber\\
&&G_i\ _\lambda x=c^iy,\ G_i\ _\lambda y=c^i(\pa+2a\lambda+b)x;
\end{eqnarray}
on the other hand,  actions are given by in another way:
\begin{eqnarray}\label{m4.10}
&&L_i\ _\lambda x=c^i(\pa+a\lambda+b)x,\ L_i\ _\lambda y=c^i(\pa+(a-\frac12)\lambda+b)y, \nonumber\\
&&G_i\ _\lambda x=c^i(\pa+(2a-1)\lambda+b)y,\ G_i\ _\lambda y=c^ix.
\end{eqnarray}It is not hard to see that these two different actions can be extended to the whole $\mathfrak{cls}$ such that in both cases $V$ is a $\mathfrak{cls}$-module. Let us denote the former $\mathfrak{cls}$-module by $M_{a,b,c}$ and the latter by $M_{a,b,c}^\prime.$

\begin{prop}\label{prop-5.1}
(i)\ The $\mathfrak{cls}$-module $M_{a,b,c}$ is irreducible if and only if $a\neq 0$, and $M^\prime_{a,b,c}$ is irreducible if and only if $a\neq \frac12$. Moreover, $\C[\partial](\partial+b)x\oplus\C[\partial]y$ and $\C[\partial]x\oplus\C[\partial](\partial+b)y$ are the unique nontrival $\mathfrak{cls}$-submodules of $M_{0,b,c}$ and $M^\prime_{\frac12,b,c}$, respectively.

(ii)\ For any $R,  T\in\{M, M^\prime\}$, then $R_{a,b,c}\cong T_{\alpha,\beta,\gamma}$ if and only if $(a,b,c)=(\alpha,\beta,\gamma)$ and $R=T$.
\end{prop}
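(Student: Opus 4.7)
The plan is to handle (i) by exhibiting the advertised proper submodules, establishing irreducibility in the generic parameter range, and then proving uniqueness in the exceptional case; part (ii) will then follow from a short analysis of even $\C[\partial]$-linear intertwiners.

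For the proper submodules, I would verify directly from (\ref{m4.9}) with $a=0$ that $W_0 := \C[\partial](\partial+b)x \oplus \C[\partial]y$ is closed under every $L_i{}_\lambda$ and $G_i{}_\lambda$; the key observation is that the $\lambda$-coefficient $2a$ in $G_i{}_\lambda y = c^i(\partial+2a\lambda+b)x$ vanishes exactly when $a=0$, so $G_i{}_\lambda y = c^i(\partial+b)x \in W_0$. The $M'_{\frac12,b,c}$ case uses the parallel fact that the $\lambda$-coefficient $2a-1$ in $G_i{}_\lambda x$ vanishes exactly when $a=\tfrac12$.

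For irreducibility in $M_{a,b,c}$ with $a\neq 0$, I would take any nonzero submodule $W$ and any nonzero $w = p(\partial)x + q(\partial)y \in W$, then extract a pure even element via the cancellation
\[
p(\partial+\lambda)\,w - q(\partial)\,G_0{}_\lambda w = \bigl[p(\partial+\lambda)p(\partial) - q(\partial)q(\partial+\lambda)(\partial+2a\lambda+b)\bigr]x \in W[\lambda].
\]
A case analysis comparing $\deg p$ with $\deg q + 1$ shows the leading $\lambda$-coefficient of this bracket is a nonzero polynomial in $\partial$, producing $r(\partial)x \in W$ for some $r \neq 0$. Applying $G_0{}_\lambda r(\partial)x = r(\partial+\lambda)y$ puts $y \in W$, and then $G_0{}_\lambda y = (\partial+2a\lambda+b)x$ has $\lambda^1$-coefficient $2a\cdot x$, forcing $x \in W$ since $a \neq 0$; hence $W = V$. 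When $a=0$, the same computation still gives $y \in W$ and hence $W \supseteq W_0$; any strictly larger $W$ must contain some $p(\partial)x$ with $(\partial+b)\nmid p(\partial)$, and a B\'ezout argument applied to $p(\partial)x$ and $(\partial+b)x \in W$ forces $x \in W$, so $W = V$. The $M'$ case is entirely parallel, using $G_0{}_\lambda y = x$ to kill the $x$-part instead and exploiting the $\lambda^1$-coefficient $(2a-1)$ of $G_0{}_\lambda x$.

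For (ii), any even isomorphism $\phi$ of $\Z_2$-graded conformal modules satisfies $\phi(x) = fx'$ and $\phi(y) = gy'$ with $f,g \in \C^*$ (units in $\C[\partial]$). Intertwining $L_0{}_\lambda$ on $x$ gives $(a,b) = (\alpha,\beta)$, then intertwining $L_1{}_\lambda$ gives $c = \gamma$; and intertwining $G_0{}_\lambda$ on $x$ in the mixed case $R = M$, $T = M'$ yields the impossible relation $g = f(\partial+(2\alpha-1)\lambda+\beta)$ inside $\C^*$, forcing $R = T$. The main obstacle is the irreducibility step: verifying that the leading $\lambda$-coefficient of the cancellation identity never vanishes in the boundary case $\deg p = \deg q + 1$ requires an additional $\partial$-degree count showing that $p_m p(\partial) - 2a q_n q(\partial)$ retains degree $m$ with leading coefficient $p_m^2 \neq 0$.
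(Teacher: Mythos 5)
Your proposal is correct, and it reaches the result by a genuinely different route than the paper. For the irreducibility in (i), the paper passes to the subalgebra $\mathfrak{clv}$: it invokes the known irreducibility of the rank-one $\mathfrak{clv}$-module $\C[\partial]x$ (which is exactly where $a\neq 0$ enters) to normalize a generator of a submodule to the form $x+h(\partial)y$, then applies $G_i$ and subtracts to fall back into $\C[\partial]x$. Your cancellation identity $p(\partial+\lambda)w-q(\partial)\,G_0{}_{\lambda}w$ is self-contained and avoids that citation, at the price of the leading-coefficient case analysis; the only point to watch is that in the $a=0$ branch the $\lambda$-degree of the second term drops from $\deg q+1$ to $\deg q$, so the sub-cases shift slightly (the quickest uniform justification that the bracket is nonzero is that its two terms have total degrees $2\deg p$ and $2\deg q+1$, of opposite parity), but the conclusion $y\in W$ survives, and your B\'ezout step then actually proves the uniqueness of the proper submodule, which the paper merely asserts as ``clear.'' For (ii), the paper derives $\alpha=\tfrac12$ from the $G$-intertwining relation and concludes by the irreducibility contrast between $M_{\frac12,b,c}$ and $M'_{\frac12,b,c}$, whereas you read the contradiction directly off the relation $g=f(\partial+(2\alpha-1)\lambda+\beta)$ against $g\in\C^*$; your version is shorter and, unlike the paper's, does not depend on part (i). Both approaches are sound; yours trades reliance on the cited rank-one classification for a few lines of explicit degree bookkeeping.
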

\begin{proof}(i)\quad
We only restrict ourself to the irreducibility of $M_{a,b,c}$, the other one can be treated similarly. It is clear that $\C[\partial](\partial+b)x\oplus\C[\partial]y$ is the unique maximal submodule of $M_{0,b,c}$. So the irreducibility of $M_{a,b,c}$ implies that $a\neq 0$.

Conversely, we show that $M_{a,b,c}$ is irreducible if $a\neq 0$. This is equivalent to showing that any submodule $I_{f(\partial), g(\partial)}$ generated by the single nonzero element $f(\partial)x+g(\partial)y$ is the entire $M_{a,b,c}.$ Note by applying the action of $G_i$ on this element if necessary we may assume that $f(\partial)\neq 0$. Due to the irreducibility of $\C[\partial]x$ as an $\mathfrak{clv}$-module (see \cite{DK,WCY}), we can obtain that $$x+h(\partial)y\in I_{f(\partial), g(\partial)}$$ for some $h(\partial)\in\C[\partial]$ by applying the actions of $\mathfrak{clv}$. So $c_i^{-1}G_i\ _\lambda \big(x+h(\partial)y\big)=y+h(\partial+\lambda)(\partial+2a\lambda+b)x\in I_{f(\partial), g(\partial)}[\lambda],$ and then $$y+h(\partial)(\partial+b)x\in I_{f(\partial),g(\partial)}.$$ Now these two elements give $0\neq x-\big({h}(\partial)\big)^2(\partial+b)x\in I_{f(\partial),g(\partial)}.$  It follows from the irreducibility of $\C[\partial]x$ as an $\mathfrak{clv}$-module again that $x\in I_{f(\partial),g(\partial)}$ and therefore $y\in I_{f(\partial),g(\partial)}$ by \eqref{m4.9}. Hence, $I_{f(\partial),g(\partial)}=M_{a,b,c}$, completing the proof.

(ii)\quad Assume that $R_{a,b,c}=\C[\partial]x\oplus \C[\partial]y$ and $T_{\alpha,\beta,\gamma}=\C[\partial]x^\prime\oplus\C[\partial]y^\prime$, and let $$\phi: R_{a,b,c}\rightarrow T_{\alpha,\beta,\gamma}$$ be an isomorphism.  Then there exist some  $f(\partial), g(\partial)\in\C[\partial]$ such that $$\phi(x)=f(\partial)x^\prime\quad {\rm and}\quad  \phi(y)=g(\partial)y^\prime.$$ It follows immediately from $\phi(L_i\ _\lambda x)=L_i\ _\lambda\phi(x)$ that $$c^i(\partial+a\lambda+b)f(\partial)x^\prime=\gamma^i f(\partial+\lambda)(\partial+\alpha\lambda+\beta)x^\prime, $$ from which we can see that $(a,b,c)=(\alpha,\beta,\gamma)$ and that $f(\partial)$ is a constant term, which may be assumed to be $1$ by means of replacing $x^\prime$ by $f(\partial)x^\prime$. It remains to show that $R=T$. Suppose on the contrary that $R\neq T$. Without loss of generality, we assume that $R=M$ and $T=M^\prime$. Then from $\phi(G_i\ _\lambda x)=G_i\ _\lambda\phi(x)$ we obtain that $a=\frac12$.  By (i), $M_{\frac12,b,c}$ is irreducible but $M^\prime_{\frac12,b,c}$ is reducible, so $\phi$ can not be isomorphic, a contradiction.
\end{proof}

The aim of this section is to classify all free $\mathfrak{cls}$-modules of rank two. In fact, the  two classes of $\mathfrak{cls}$-modules constructed as above exhaust  all free $\mathfrak{cls}$-modules of rank two.

\begin{theo}\label{freeofranktwo}
Suppose that $V=V^{\bar{0}}\oplus V^{\bar{1}}$ is a nontrivial free $\mathfrak{cls}$-module of rank two. Then $V$
is either isomorphism to $M_{a,b,c}$ defined by {\rm (\ref{m4.9})} or  $M^{\pr}_{a,b,c}$ defined by {\rm(\ref{m4.10})}.
\end{theo}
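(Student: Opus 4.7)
The plan is to pass the module structure on $V$ to one over the loop Virasoro subalgebra $\mathfrak{clv}$, following the strategy highlighted in the introduction. Using the $\Z_2$-grading, write $V^{\bar 0}=\C[\partial]x$ and $V^{\bar 1}=\C[\partial]y$, and encode the actions as $L_i\,_\lambda x=p_i(\partial,\lambda)x$, $L_i\,_\lambda y=q_i(\partial,\lambda)y$, $G_i\,_\lambda x=r_i(\partial,\lambda)y$, $G_i\,_\lambda y=s_i(\partial,\lambda)x$, with $p_i,q_i,r_i,s_i\in\C[\partial,\lambda]$. Each of $V^{\bar 0}$ and $V^{\bar 1}$ is then a free rank-one $\mathfrak{clv}$-module, so the classification of such modules in \cite{WCY} (the $V_{a,b}$-series referenced in the introduction) supplies, modulo a possibly trivial action, $p_i(\partial,\lambda)=c^i(\partial+a\lambda+b)$ and $q_i(\partial,\lambda)=d^i(\partial+a'\lambda+b')$ for some $a,a',b,b'\in\C$ and $c,d\in\C^*$.

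First I would rule out the possibility that $\mathfrak{clv}$ acts trivially on either graded piece. If all $p_i=0$, then the Jacobi axiom applied to $[G_{i\,\lambda}G_j]=2L_{i+j}$ evaluated on $x$ forces products $s_j(\partial+\lambda,\mu)r_i(\partial,\lambda)+r_j(\partial+\lambda,\mu)s_i(\partial,\lambda)$ to vanish for all $i,j$; combined with skew-symmetry of the $G$-$G$ bracket this collapses the $G$-action on $x$, and then the bracket $[L_{i\,\lambda}G_j]=(\partial+\tfrac{3}{2}\lambda)G_{i+j}$ together with $q_i\ne 0$ propagates the triviality to $y$ as well, contradicting the nontriviality of $V$. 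A symmetric argument excludes a trivial action on $V^{\bar 1}$, so both $L$-actions come in the explicit form above.

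The bulk of the work is then to extract the $G$-actions. Applying Jacobi to $[L_{0\,\lambda}G_j]$ and $[L_{j\,\lambda}G_0]$ on $x$ and on $y$, and specializing $\mu=0$ (the same trick used in the proof of Theorem~\ref{main3}), expresses $r_j$ and $s_j$ in closed form in terms of $r_0$ and $s_0$; cross-comparing the two recursions pins down $c=d$ and $b=b'$, while matching the $\lambda$-coefficients against $p_i,q_i$ forces the dichotomy $a'-a=\pm\tfrac{1}{2}$. Finally the axiom $[G_{0\,\lambda}G_0]=2L_0$ applied to $x$ and to $y$ determines the remaining constants in $r_0,s_0$; after rescaling $y$ to absorb a free nonzero scalar, the branch $a'=a+\tfrac{1}{2}$ reproduces exactly (\ref{m4.9}) and the branch $a'=a-\tfrac{1}{2}$ reproduces (\ref{m4.10}), yielding $V\cong M_{a,b,c}$ or $V\cong M'_{a,b,c}$ respectively.

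I expect the principal obstacle to lie in the two-variable polynomial bookkeeping of this last step, especially the cross-identification $c=d$, since the natural recursions are inhomogeneous in $(\partial,\lambda)$ with $\mu$ sitting as a parameter; once the $\mu=0$ specialization is made available the system reduces to an analogue of the computation already carried out in Section~3, so no new conceptual difficulty is anticipated.
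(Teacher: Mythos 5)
Your proposal follows essentially the same route as the paper: restrict to the subalgebra $\mathfrak{clv}$, invoke the rank-one classification from \cite{WCY} to fix the $L$-actions, and then solve the polynomial functional equations coming from the $[L_{i\,\lambda}G_j]$ and $[G_{i\,\lambda}G_j]$ Jacobi identities to pin down the $G$-actions, with the relation $g_i(\partial+\lambda,\lambda)g_i'(\partial,\lambda)=\partial+2a\lambda+b$ producing the two branches $a'=a\pm\tfrac12$. The only cosmetic difference is how $c=d$ is extracted (your $\lambda=\mu=0$ specialization versus the paper's vanishing of the $\partial^2$-coefficient), so the argument is sound and matches the paper's.
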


Let $V=V^{\bar{0}}\oplus V^{\bar{1}}$ be a nontrivial free $\mathfrak{cls}$-module of rank two. Then both $V^{\bar 0}$ and $V^{\bar 1}$ are nontrivial. To see this, suppose on the contrary that $V^{\bar i}=0$, then $G_j\ _\lambda V^{\overline {i+1}}=0$ and hence $L_j\ _\lambda V^{\overline {i+1}}=0$ for any $j\in\Z$. So in this case $V$ is a trivial $\mathfrak{cls}$-module, a contradiction.
Hence both $V^{\bar{0}}=\C[\partial]x\ {\rm and}\ V^{\bar{1}}=\C[\partial]y$ are $\C[\partial]$ and hence  $\mathfrak{clv}$-modules of rank one.

Observe that $V$ cannot be a trivial $\mathfrak{clv}$-module, since the relations $[L_{i\ \lambda} G_j]=(\partial+\frac32\lambda) G_{i+j}$ for any $i,j\in\Z$ would imply the actions of $G_i$ on $V$ are all trivial.
Now it follows from    \cite[Proposition 4.3]{WCY} that there exist $a, a^\prime, b, b^\prime, c,d\in\C$  such that $$L_i\ _\lambda x=c^i(\pa+a\lambda+b)x,\
L_i \ _\lambda y={d}^i(\pa+a^\prime\lambda+b^\prime)y, \ \forall i\in\Z.$$ Note that at least one of $c$ and $d$ is nonzero, without loss of generality, we assume that $c\neq 0$.

 In order to determine the module structure of $\mathfrak{cls}$ on $V$  we only need to give the explicit  actions of $G_i$ on $x$ and $y$.    Assume \begin{eqnarray*}
 G_i\ _\lambda x=c^ig_i(\partial,\lambda)y\quad {\rm and}
 \quad G_i\ _\lambda y=c^ig^\prime_i(\partial,\lambda)x\quad {\rm for\ some}\ g_i(\partial,\lambda),  g^\prime_i(\partial,\lambda)\in\C[\pa,\lambda].
\end{eqnarray*}

 It follows from $[G_i\ _\lambda G_j]_{\lambda+\mu}x=2L_{i+j}\ _{\lambda+\mu}x$ that
\begin{eqnarray}
&&g_i(\pa+\lambda, \mu)g^{\pr}_j(\pa,\lambda)+g_i(\pa+\mu,\lambda)g^{\pr}_j(\pa,\mu)=\pa+a(\lambda+\mu)+b,\quad \forall i,j\in\Z.\label{d4.5}
\end{eqnarray}
In particular,  setting  $\lambda=\mu$ and  $i=j$ in above formula one has
\begin{equation}\label{m4.7}
g_i(\pa+\lambda,\lambda)g^{\pr}_i(\pa,\lambda)=\pa+2a\lambda+b,
\end{equation}which immediately implies

\begin{equation}
g_i(\pa,\lambda)\neq 0,\ g^{\pr}_i(\pa,\lambda)\neq 0,\ \forall i\in\Z.
\end{equation}
This together with $[G_i\ _\lambda G_j]_{\lambda+\mu}y=2L_{i+j}\ _{\lambda+\mu}y$ forces  $d\neq 0$. So we have arrived at the following lemma.
\begin{lemm}
The polynomials $g_i(\partial,\lambda),  g^\prime_i(\partial,\lambda)$ for all $i\in\Z$ and complex numbers $c, d$ are all nonzero.
\end{lemm}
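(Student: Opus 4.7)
The plan is to reduce both halves of the assertion to the conformal Jacobi axiom applied to $[G_i\ _\lambda G_j]$, exploiting the fact that $\C[\pa,\lambda]$ is an integral domain. The first half, nonvanishing of the polynomials, is essentially free once (\ref{m4.7}) is in hand: since the right-hand side $\pa+2a\lambda+b$ is a nonzero element of $\C[\pa,\lambda]$, neither factor on the left can vanish, so $g_i(\pa,\lambda)\neq 0$ and $g_i'(\pa,\lambda)\neq 0$ for every $i\in\Z$.

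For $d\neq 0$, I would rerun the Jacobi computation that produced (\ref{d4.5}), but evaluated on $y$ instead of $x$. Expanding $[G_i\ _\lambda G_j]_{\lambda+\mu} y$ via the conformal module axiom (the Koszul sign for two odd elements yields a plus) and equating with $2L_{i+j}\ _{\lambda+\mu}y = 2d^{i+j}(\pa+a'(\lambda+\mu)+b')y$ should give
\begin{equation*}
c^{i+j}\bigl[g_j'(\pa+\lambda,\mu)\,g_i(\pa,\lambda) + g_i'(\pa+\mu,\lambda)\,g_j(\pa,\mu)\bigr] = 2d^{i+j}\bigl(\pa+a'(\lambda+\mu)+b'\bigr).
\end{equation*}
Specializing $i=j$ and $\lambda=\mu$ collapses this to
\begin{equation*}
c^{2i}\,g_i'(\pa+\lambda,\lambda)\,g_i(\pa,\lambda) = d^{2i}\bigl(\pa+2a'\lambda+b'\bigr).
\end{equation*}
By the previous paragraph the left-hand side is a nonzero element of $\C[\pa,\lambda]$ for every $i$, and $c\neq 0$ by the standing assumption; choosing any $i\neq 0$ therefore forces $d\neq 0$.

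The only place where real care is needed is tracking the Koszul signs and keeping straight which variable gets shifted when the actions of $G_i$ and $G_j$ are composed on $y$; beyond that, the argument is direct bookkeeping and requires no structural input other than the identities already established and the fact that $\C[\pa,\lambda]$ is a domain.
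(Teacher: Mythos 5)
Your proposal is correct and follows essentially the same route as the paper: nonvanishing of $g_i(\pa,\lambda)$ and $g_i'(\pa,\lambda)$ is read off from the factorization identity (\ref{m4.7}) because $\C[\pa,\lambda]$ is a domain, and $d\neq 0$ is then forced by running the same Jacobi computation $[G_i\ _\lambda G_j]_{\lambda+\mu}y=2L_{i+j}\ _{\lambda+\mu}y$ on $y$. The paper states this last step in one line; you merely make the specialization $i=j$, $\lambda=\mu$ explicit, which is the same argument.
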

Using this lemma, we now can give the proof of the theorem above.
\bigskip

\noindent{\em Proof of Theorem \ref{freeofranktwo}}\quad Note that for any $i\in\Z$ by \eqref{m4.7} either ${\rm deg}_{\partial}g_{i}(\partial,\lambda)=1$ and ${\rm deg}_{\partial}g_{i}^\prime(\partial,\lambda)=0$ or ${\rm deg}_{\partial}g_{i}(\partial,\lambda)=0$ and ${\rm deg}_{\partial}g_{i}^\prime(\partial,\lambda)=1$.

\begin{case}
 There exists some $i\in\Z$ such that ${\rm deg}_{\partial}g_{i}(\partial,\lambda)=1$ and ${\rm deg}_{\partial}g_{i}^\prime(\partial,\lambda)=0$. \end{case}

It follows from $[L_j\ _\lambda G_i]_{\lambda+\mu}x=(\pa+\frac32\lambda)G_{j+i}\ _{\lambda+\mu} x$ for any $i,j\in\Z$ that
\begin{eqnarray}
g_i(\pa+\lambda,\mu)(d/c)^j(\pa+a^\prime\lambda+b^\prime)-g_i(\pa,\mu)(\pa+\mu+a\lambda+b)
=(\frac12\lambda-\mu)g_{j+i}(\pa,\lambda+\mu).\label{d4.4}
\end{eqnarray} Now view   terms on both sides of \eqref{d4.4} as polynomials in the variable $\partial$ with   coefficients in $\C[\lambda,\mu]$,  then the coefficient of  $\partial^2$ in the left hand side must be zero, since ${\rm deg}_{\partial} g_{i+j}(\partial, \lambda+\mu)\leq 1$. This forces $d=c$ and hence \eqref{d4.4} turns out to be  \begin{eqnarray}
g_i(\pa+\lambda,\mu)(\pa+a^\prime\lambda+b^\prime)-g_i(\pa,\mu)(\pa+\mu+a\lambda+b)=(\frac12\lambda-\mu)g_{j+i}(\pa,\lambda+\mu).\label{d4.41}\end{eqnarray} Furthermore, by choosing $\lambda=0$ in  \eqref{d4.41} and comparing the coefficients of $\partial\mu$ one can see that ${\rm deg}_{\partial}g_j(\partial,\lambda)=1$ for all $j\in\Z$. This allows us for each $i\in\Z$ to write \begin{eqnarray*}g_i(\partial,\lambda)=\partial s_i(\lambda)+t_i(\lambda)\quad{\rm  for\ some}\  s_i(\lambda),t_i(\lambda)\in\C[\lambda].\end{eqnarray*} Using these expressions, \eqref{d4.41} is equivalent to
\begin{eqnarray}
& s_i(\mu)\big((a^\prime-a)\lambda+b^\prime-b-\mu\big)+\lambda s_i(\mu)=(\frac12\lambda-\mu)s_{i+j}(\lambda+\mu)\label{eq-4.42}\\
&{\rm and} \quad  t_i(\mu)\big((a^\prime-a)\lambda+b^\prime-b-\mu\big)+\lambda(a^\prime\lambda+b^\prime) s_i(\mu)=(\frac12\lambda-\mu)t_{i+j}(\lambda+\mu)\label{eq-4.43}.
\end{eqnarray}
It is not hard to observe from \eqref{eq-4.42} that all  $s_i(\lambda)$ are equal to  a nonzero constant $\gamma$, and also that $a^\prime=a-\frac12$, $b^\prime=b$. Moreover, replacing $y$ by $\gamma y$ we may assume that $\gamma=1$.  At this very moment, \eqref{eq-4.43} can be rewritten as
 \begin{equation}\label{eq-4.43-1}
-(\frac12\lambda+\mu)t_i(\mu)+(a-\frac12)\lambda^2+b\lambda =(\frac12\lambda-\mu)t_{i+j}(\lambda+\mu),\end{equation} from which one can see that  ${\rm deg}t_i(\lambda)\leq 1$ and thus  has the form $t_i(\lambda)=\alpha_i\lambda+\beta_i$ for any $i\in\Z$ and some $\alpha_i,\beta_i\in\C.$  Substituting these explicit expressions into \eqref{eq-4.43-1} and carrying a direct computation  show that $t_i(\lambda)=(2a-1)\lambda+b$ for any $i\in\Z$.

To sum up,  so far under the assumption $$L_i\ _\lambda x=c^i(\pa+a\lambda+b)x$$ we have obtained the following actions\begin{eqnarray*}
 L_i\ _\lambda y=c^i(\pa+(a-\frac12)\lambda+b)y\quad {\rm and}\quad G_i\ _\lambda x=c^i(\pa+(2a-1)\lambda+b)y.
\end{eqnarray*}
Then by the  remark at the very beginning of this proof, $g_i^\prime(\lambda):=g_i^\prime(\partial,\lambda)\in\C[\lambda]$. It follows from $[G_i\ _\lambda G_j] \ _{\lambda+\mu}y=2L_{i+j}\ _{\lambda+\mu}y$  that  $$(\partial+(2a-1)\lambda+b)g_j^\prime(\mu)+(\partial+(2a-1)\mu+b)g_i^\prime(\lambda)=2\big(\partial+(a-\frac12)(\lambda+\mu)+b\big),$$ which gives immediately rise to $g^\prime_i(\lambda)=1$ for all $i\in\Z$. Whence $V\cong M^\prime_{a,b,c}.$

\begin{case}
 There exists some $i\in\Z$ such that ${\rm deg}_{\partial}g_{i}(\partial,\lambda)=0$ and ${\rm deg}_{\partial}g_{i}^\prime(\partial,\lambda)=1$. \end{case}
 For this case,  similar arguments as for the previous one can be adopted to show that $V\cong M_{a,b,c}.$
\QED

\bigskip
We conclude this section with the following classification result, which is a consequence of Proposition \ref{prop-5.1} and Theorem \ref{freeofranktwo}.

\begin{theo}\label{main4-2}(i)
The set $\{M_{a,b,c}, M^\prime_{\alpha,\beta,\gamma}\mid a,b,\alpha,\beta,\in\C\ {\rm and}\  c,\gamma\in\C^* \ {\rm with}\ a\neq 0, \alpha\neq \frac12\}$ is a complete list of inequivalent   irreducible $\mathfrak{cls}$-modules of rank two.

(ii) Any $\mathfrak{cls}$-module $V$ generated by two elements is either trivial or one of the two $\mathfrak{cls}$-modules $M_{a,b,c}$ and $M^\prime_{a,b,c}$ for some $a,b,c\in\C$.
\end{theo}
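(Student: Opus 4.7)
The plan is to obtain both parts of the theorem as a packaging of the earlier material, with a short additional step for (ii).

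For (i), one assembles Theorem~\ref{freeofranktwo}, which classifies every nontrivial free rank-two $\mathfrak{cls}$-module as some $M_{a,b,c}$ or some $M'_{a,b,c}$, with Proposition~\ref{prop-5.1}(i), which identifies the irreducible members (those with $a\neq 0$ in the $M$-family and $a\neq\tfrac12$ in the $M'$-family), and Proposition~\ref{prop-5.1}(ii), which establishes pairwise non-isomorphism of distinct parameter triples (both within each family and across the two families). The concatenation gives (i) verbatim.

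For (ii), let $V$ be a nontrivial $\mathfrak{cls}$-module generated by two elements. First I would reduce to the case that $V$ is generated by one even element $u$ and one odd element $v$: decomposing each generator into $\Z_2$-homogeneous components produces at most two even and two odd generators, and a short argument using the bracket $[G_{i\,\lambda}G_j]=2L_{i+j}$ together with the introductory fact that free rank-one $\mathfrak{cls}$-modules are trivial allows us to discard the redundant ones. Next, invoking the single-generator classification of $\mathfrak{clv}$-modules from \cite{WCY}, the $L_0$-action must have the form $L_0\ _\lambda u=(\partial+a\lambda+b)u$ and $L_0\ _\lambda v=(\partial+a'\lambda+b')v$; the leading $\partial$-coefficient guarantees that $\C[\partial]u$ and $\C[\partial]v$ are $\C[\partial]$-torsion free, hence free of rank one. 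Consequently $V=\C[\partial]u\oplus\C[\partial]v$ is a free $\mathfrak{cls}$-module of rank two, and Theorem~\ref{freeofranktwo} concludes that $V\cong M_{a,b,c}$ or $V\cong M'_{a,b,c}$.

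The main obstacle I expect is the reduction to exactly one generator of each parity. One needs to rule out the degenerate scenario in which, say, the two generators are both even and produce a nontrivial odd component via the $G_i$-action but without a single cyclic generator. The conformal module axioms, combined with the bracket $[G_{i\,\lambda}G_j]=2L_{i+j}$ and the triviality of free rank-one $\mathfrak{cls}$-modules noted in the introduction, should resolve this, but the bookkeeping is delicate. The torsion-freeness step, by contrast, is a routine consequence of the explicit form of the $L_0$-action.
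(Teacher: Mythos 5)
Your handling of part (i) agrees in substance with the paper, which simply declares (i) obvious and means exactly the assembly you describe: Theorem~\ref{freeofranktwo} for completeness, Proposition~\ref{prop-5.1}(i) for irreducibility, Proposition~\ref{prop-5.1}(ii) for inequivalence. For part (ii), however, you take a genuinely different route from the paper, and it has real gaps. The paper's proof is a two-line quotient argument: $V$ is generated by two elements \emph{as a $\C[\partial]$-module}, hence is a quotient of a free rank-two module, hence of some $M_{a,b,c}$ or $M'_{a,b,c}$ by Theorem~\ref{freeofranktwo}; and by Proposition~\ref{prop-5.1}(i) the only proper nonzero submodules occur in $M_{0,b,c}$ and $M'_{\frac12,b,c}$, whose corresponding quotients are trivial. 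You never use Proposition~\ref{prop-5.1}(i) in (ii), which is a warning sign: your strategy only covers the case where $V$ turns out to be free of rank two and silently omits the proper-quotient cases that the trivial alternative in the statement is there to absorb.

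Concretely, two steps fail. First, the reduction to one even and one odd generator is the crux and you leave it unproved; moreover, ``discarding redundant generators via $[G_{i\,\lambda}G_j]=2L_{i+j}$'' only controls the \emph{conformal} submodule generated by the elements, not their $\C[\partial]$-span. Under the reading ``generated as a conformal module'' the statement of (ii) is in fact false --- the $\Z$-graded module $M_{a,b}$ of Section~5 with $a\notin\{0,\frac12\}$ is generated by $x_0$ alone as a conformal module yet has infinite rank --- so the generation in (ii) must be meant over $\C[\partial]$, and then the bracket trick buys you nothing. Second, the torsion-freeness step is circular: the formula $L_0\ _\lambda u=(\partial+a\lambda+b)u$ comes from the classification of \emph{free} rank-one $\mathfrak{clv}$-modules, so it cannot be invoked before knowing that $\C[\partial]u$ is free. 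The correct general fact is that torsion elements of a conformal module are annihilated by the action, so each cyclic piece is either free of rank one or trivially acted upon; the mixed configurations (one free piece and one torsion piece, or two free pieces whose sum is not direct) are exactly what your argument never rules out, and what the paper's quotient argument together with Proposition~\ref{prop-5.1}(i) disposes of at once.
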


\begin{proof} (i) is obvious. (ii)  Since any $\C[\partial]$-module generated by two elements is a quotient of some free $\C[\partial]$-module of rank two,   $V$ is a quotient of $M_{a,b,c}$ or $M^\prime_{a,b,c}$ for some  $a,b,c\in\C$ by Theorem \ref{freeofranktwo}. Then (ii) follows from the fact that the nontrivial quotient of $M_{0,b,c}$ or $M_{\frac12,b,c}^\prime$  is a trivial $\mathfrak{cls}$-module by Proposition \ref{prop-5.1} (i).
\end{proof}

\section{Free $\Z$-graded modules of rank $\leq 2$}
\setcounter{section}{5}\setcounter{theo}{0}\setcounter{case}{0}

Let us first collect some results concerning  nontrivial free $\Z$-graded  $\mathfrak{clv}$-modules of rank one. Two classes of such modules were introduced on the free $\C[\partial]$-module $\oplus_{i\in\Z}\C[\partial]v_i$ in \cite{WCY}:
\begin{itemize}\parskip-3pt

 \item[(i)] Denote   by $V_{a,b}$  when $\oplus_{i\in\Z}\C[\partial]v_i$ is equipped with the $\mathfrak{clv}$-module structure given by
 $$L_i\ _\lambda v_j=(\pa+a\lambda+b)v_{i+j},\quad \forall i\in\Z, $$ where $a,b\in\C$;

\item[(ii)] Denote by  $V_{A,b}$  when $\oplus_{i\in\Z}\C[\partial]v_i$ is equipped with the $\mathfrak{clv}$-module structure given by
 \begin{equation*}
L_i\ _\lambda v_j=\begin{cases}(\pa+b)v_{i+j},&\mbox{ if}\ (a_j,a_{i+j})=(0,0),\\
(\pa+\lambda+b)v_{i+j},&\mbox{ if}\ (a_j,a_{i+j})=(1,1),\\
v_{i+j},&\mbox{ if}\ (a_j,a_{i+j})=(0,1),\\
(\pa+b)(\pa+\lambda+b)v_{i+j},&\mbox{ if}\ (a_j,a_{i+j})=(1,0),
\end{cases}\end{equation*}
\end{itemize}where $A=(a_i)_{i\in\Z}$ is an element of the product $\prod_{i\in\Z}\Z_2$ of $\{\Z_2\}_{i\in\Z}$. Note that $V_{\{0\}_{i\in\Z},b}$ coincides with $V_{0,b}$ and $V_{\{1\}_{i\in\Z},b}$ coincides with $V_{1,b}.$

We cite the classification result of nontrivial free $\Z$-graded  $\mathfrak{clv}$-modules of rank one from \cite{WCY}  as a lemma here.

\begin{lemm}\label{lemm-6.1}
Suppose that $V$ is a free $\Z$-graded  $\mathfrak{clv}$-module of rank one. Then $V=V_{a,b}$ or $V=V_{A,b}$ for some $a,b\in\C$ and $A\in\prod_{i\in\Z}\Z_2$.
\end{lemm}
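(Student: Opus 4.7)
The plan is to classify the structure constants $f_{i,j}(\partial,\lambda)\in\C[\partial,\lambda]$ defined by $L_i{}_\lambda v_j=f_{i,j}(\partial,\lambda)v_{i+j}$ using the Jacobi identity. First I reduce to the $L_0$-action: each graded piece $V_j=\C[\partial]v_j$ carries a rank-one conformal module structure over the Virasoro Lie conformal algebra $\C[\partial]L_0$, and by the standard classification of rank-one modules over the Virasoro Lie conformal algebra either $f_{0,j}=0$ or $f_{0,j}(\partial,\lambda)=\partial+a_j\lambda+b_j$ for some $a_j,b_j\in\C$. A short compatibility argument using the Jacobi of $L_0,L_i,v_k$ shows that the indices $j$ with $f_{0,j}\ne 0$ form a summand on which the remaining $L_i$ act, its complement being a trivial summand; we may thus restrict to the case $f_{0,j}\ne 0$ for all $j$.

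Next I apply the Jacobi identity to $L_0,L_i,v_k$, which gives
\[
(\lambda-\mu)f_{i,k}(\partial,\lambda+\mu)=f_{i,k}(\partial,\lambda)\,f_{0,k}(\partial+\lambda,\mu)-f_{0,i+k}(\partial,\mu)\,f_{i,k}(\partial+\mu,\lambda).
\]
Setting $\lambda=0$ and reading off the $\mu^0$ coefficient forces $b_k=b_{i+k}$, so all $b_j$ coincide with a single constant $b$. Reading the $\mu^1$ coefficient yields the ODE $(\partial+b)g'(\partial)=(a_k-a_{i+k}+1)g(\partial)$ for $g(\partial):=f_{i,k}(\partial,0)$, whose polynomial solutions are $g(\partial)=c_{i,k}(\partial+b)^{e_{i,k}}$ with $e_{i,k}:=a_k-a_{i+k}+1\in\Z_{\ge 0}$. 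Applying the same to $(-i,i+k)$ in place of $(i,k)$ gives $|a_k-a_{i+k}|\le 1$, so the set $\{a_j:j\in\Z\}$ is either a singleton or of the form $\{\alpha,\alpha+1\}$ for some $\alpha\in\C$. The full $f_{i,k}(\partial,\lambda)$ is then recovered by solving the same Jacobi for arbitrary $\mu$, producing an explicit closed form in each case $e_{i,k}\in\{0,1,2\}$.

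If all $a_j$ coincide with a single $a\in\C$, every $f_{i,k}$ takes the shape $c_{i,k}(\partial+a\lambda+b)$; the Jacobi applied to $L_i,L_j,v_k$ then collapses to the cocycle relation $c_{i+j,k}=c_{i,j+k}\,c_{j,k}$, which is trivialized by a rescaling $v_j\mapsto d_jv_j$. This yields $V\cong V_{a,b}$. Otherwise $\{a_j\}=\{\alpha,\alpha+1\}$, and the $e_{i,k}=2$ case (coming from $(a_k,a_{i+k})=(\alpha+1,\alpha)$) yields a quadratic $f_{i,k}(\partial,\lambda)$ of the form $c_{i,k}[(\partial+b)^2+(1+2\alpha)(\partial+b)\lambda+\alpha\lambda^2]$. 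The main obstacle, as I see it, is to show $\alpha=0$: testing the Jacobi with a second nonzero generator (e.g.\ $L_i,L_{-i}$ acting on $v_k$) forces this quadratic to factor as $c_{i,k}(\partial+b)(\partial+\lambda+b)$, and matching coefficients pins down $\alpha=0$, hence $a_j\in\{0,1\}$. The remaining shapes of the $f_{i,k}$ then match the $V_{A,b}$ prescription up to the constants $c_{i,k}$, which a final rescaling of the $v_j$'s absorbs, yielding $V\cong V_{A,b}$ for some $A\in\prod_{i\in\Z}\Z_2$.
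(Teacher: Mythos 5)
A preliminary but important remark: the paper does not prove this statement at all. It is imported verbatim from \cite{WCY} (``We cite the classification result of nontrivial free $\Z$-graded $\mathfrak{clv}$-modules of rank one from \cite{WCY} as a lemma here''), so there is no in-paper proof to compare yours against; what you have written is a reconstruction of the external result. Judged on its own merits, your outline is essentially sound, and I checked the computations that carry the weight. Writing $P=\partial+b$, the $\lambda=0$ specialization of your displayed identity gives $(b_k-b_{i+k})f_{i,k}(\partial,0)=0$ at order $\mu^0$ and the ODE $(\partial+b)g'=(a_k-a_{i+k}+1)g$ at order $\mu^1$; the recovered polynomial in the case $(a_k,a_{i+k})=(\alpha+1,\alpha)$ is indeed $c\bigl(P^2+(2\alpha+1)P\lambda+\alpha\lambda^2\bigr)$; and substituting this back into the unspecialized Jacobi identity, the coefficient of $\lambda^2\mu$ forces $\alpha=\alpha(2\alpha+1)$, hence $\alpha=0$, which is exactly the step you identified as the main obstacle. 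The cocycle $c_{i+j,k}=c_{j,k}c_{i,j+k}$ is killed by $v_j\mapsto c_{j,0}^{-1}v_j$ (set $k=0$ to see $c_{i,j}=c_{i+j,0}/c_{j,0}$), so the final rescalings work as claimed.

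Two steps need tightening, and they are the same issue. Your reduction ``we may thus restrict to the case $f_{0,j}\neq 0$ for all $j$'' is not a harmless reduction: if the set $\{j: f_{0,j}=0\}$ were nonempty, $V$ would contain a nonzero trivial direct summand and would not be isomorphic to any $V_{a,b}$ or $V_{A,b}$, so the lemma would simply fail for such $V$; you must show that for a nontrivial module this set is empty. This does follow from the Jacobi identity, but it has to be argued: setting $\mu=0$ in your identity gives $f_{i,k}\cdot\bigl(\lambda-f_{0,k}(\partial+\lambda,0)+f_{0,i+k}(\partial,0)\bigr)=0$, so $f_{i,k}\neq0$ forces both $f_{0,k}\neq0$ and $f_{0,i+k}\neq0$; conversely, applying the Jacobi for $L_j,L_{-j}$ on $v_k$ with $f_{0,k}\neq0$ and specializing $\mu=\lambda$ shows (using that $\C[\partial,\lambda]$ is a domain) that neither term on the right may vanish identically, whence $f_{\pm j,k}\neq0$ for every $j$ and therefore $f_{0,m}\neq0$ for all $m$. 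The same nonvanishing of \emph{all} $f_{i,k}$ is silently used later: your conclusions $b_k=b_{i+k}$ and $e_{-i,i+k}\geq 0$ are only extracted from pairs with $f_{i,k}\neq0$, so without this lemma the constancy of $b$ and the bound $|a_k-a_{i+k}|\leq1$ would only hold on an unspecified subset of index pairs. Once the nonvanishing statement is inserted, the rest of your argument goes through.
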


Let $\{x_i\mid i\in\Z\}$ and $\{y_i\mid i\in\Z\}$ be two $\C[\partial]$-linearly  independent sets, and form the $\C[\partial]$-module $V=\bigoplus_{i\in\Z}(\C[\partial]x_i\oplus\C[\partial]y_i)$. For any $a,b\in\C$ and $A\in\prod_{i\in\Z}\Z_2$, we next give four different actions of $\mathfrak{cls}$ on $V$ such that $\oplus_{i\in\Z}\C[\partial]x_i$ and $\oplus_{i\in\Z}\C[\partial]y_i$  have the   form $V_{a,b}$ or $V_{A,b}$ as $\mathfrak{clv}$-modules:

 \begin{eqnarray*}\label{m6.3}
\noindent{\rm (i)}&&\ L_i\ _\lambda x_j=(\pa+a\lambda+b)x_{i+j},\quad\quad\quad\quad L_i\ _\lambda y_j=(\pa+(a-\frac12)\lambda+b)y_{i+j},\nonumber\\
&&\ G_i\ _\lambda x_j=(\pa+(2a-1)\lambda+b)y_{i+j},\quad G_i\ _\lambda y_j=x_{i+j}; \\
{\rm (ii)} &&L_i\ _\lambda x_j=(\pa+a\lambda+b)x_{i+j},\quad\quad \quad \quad ~ L_i\ _\lambda y_j=(\pa+(a+\frac12)\lambda+b)y_{i+j},\nonumber\\
&& G_i\ _\lambda x_j=y_{i+j}, \quad\quad\quad \quad\quad\quad \quad\quad\quad~ ~ G_i\ _\lambda y_j=(\pa+2a\lambda+b)x_{i+j};\\
{\rm (iii)}&&  L_i\ _\lambda x_j=(\pa+\frac12\lambda+b)x_{i+j}\quad({\rm the\ case}\  a=\frac12);\nonumber\\
&&L_i\ _\lambda y_j=\begin{cases}(\pa+b)y_{i+j},&\mbox{ if}\ (a_j,a_{i+j})=(0,0),\\
(\pa+\lambda+b)y_{i+j},&\mbox{ if}\ (a_j,a_{i+j})=(1,1),\\
y_{i+j},&\mbox{ if}\ (a_j,a_{i+j})=(0,1),\\
(\pa+b)(\pa+\lambda+b)y_{i+j},&\mbox{ if}\ (a_j,a_{i+j})=(1,0);\nonumber\\
\end{cases}\\
&& G_i\ _\lambda x_j=\begin{cases}(\pa+b)y_{i+j},&\mbox{ if}\ (a_j,a_{i+j})\in\{(0,0),(1,0)\},\\
y_{i+j},&\mbox{ if}\ (a_j,a_{i+j})\in\{(1,1),(0,1)\};\\
\end{cases}\\
&& G_i\ _\lambda y_j=\begin{cases}(\pa+\lambda+b)x_{i+j},&\mbox{ if}\ (a_j,a_{i+j})\in\{(1,1),(1,0)\},\\
x_{i+j},&\mbox{ if}\ (a_j,a_{i+j})\in\{(0,0),(0,1)\}.\nonumber
\end{cases}
\end{eqnarray*}
\begin{eqnarray*}
{\rm (iv)}
&&L_i\ _\lambda x_j=\begin{cases}(\pa+b)x_{i+j},&\mbox{ if}\ (a_j,a_{i+j})=(0,0),\\
(\pa+\lambda+b)x_{i+j},&\mbox{ if}\ (a_j,a_{i+j})=(1,1),\\
x_{i+j},&\mbox{ if}\ (a_j,a_{i+j})=(0,1),\\
(\pa+b)(\pa+\lambda+b)x_{i+j},&\mbox{ if}\ (a_j,a_{i+j})=(1,0);\nonumber
\end{cases}\\
&&  L_i\ _\lambda y_j=(\pa+\frac12\lambda+b)y_{i+j}\quad({\rm the\ case}\  a=\frac12);\nonumber\\
&& G_i\ _\lambda x_j=\begin{cases}(\pa+\lambda+b)y_{i+j},&\mbox{ if}\ (a_j,a_{i+j})\in\{(1,1),(1,0)\},\\
y_{i+j},&\mbox{ if}\ (a_j,a_{i+j})\in\{(0,0),(0,1)\};\nonumber
\end{cases}\\
&& G_i\ _\lambda y_j=\begin{cases}(\pa+b)x_{i+j},&\mbox{ if}\ (a_j,a_{i+j})\in\{(0,0),(1,0)\},\\
x_{i+j},&\mbox{ if}\ (a_j,a_{i+j})\in\{(1,1),(0,1)\};\\
\end{cases}\\
\end{eqnarray*}

Denote $V$ by $M_{a,b}$,   $M^\prime_{a,b}$,  $M_{A,b}$, $M^\prime_{A,b}$ in the cases (i)-(iv), respectively. One can verify under the given actions,  $M_{a,b}$,   $M^\prime_{a,b}$,  $M_{A,b}$ and $M^\prime_{A,b}$ becomes $\mathfrak{cls}$-modules. Note that they are all $\Z_2$-graded with the even part $\oplus_{i\in\Z}\C[\partial]x_i$ and the odd part $\oplus_{i\in\Z}\C[\partial]y_i$, and also $\Z$-graded with $\C[\partial]x_i\oplus\C[\partial]y_i$ as their $i$-th gradation. So the four classes of modules above are all free $\Z$-graded $\mathfrak{cls}$-modules of rank two.

\begin{theo}\label{main5}
Suppose that $V=V_{\bar{0}}\oplus V_{\bar{1}}$ is a nontrivial  free  $\Z$-graded $\mathfrak{cls}$-module of rank two. Then $V$ is isomorphic to one of the following modules: $$M_{a,b}, M^{\pr}_{a,b}, M_{A,b}, M^\prime_{A,b}, \quad {\rm where}\ a,b\in\C\ {\rm and}\ A\in \prod_{i\in\Z}\Z_2.$$
\end{theo}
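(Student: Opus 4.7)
The plan is to follow the strategy already used for Theorem~\ref{freeofranktwo}: first determine the even and odd parts as free $\Z$-graded $\mathfrak{clv}$-modules of rank one via Lemma~\ref{lemm-6.1}, and then pin down how the $G$-actions glue the two parts together.

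First I would check that both $V^{\bar 0}$ and $V^{\bar 1}$ are nontrivial and that the $\Z$-grading refines the $\Z_2$-grading, so that $V_i=V_i^{\bar 0}\oplus V_i^{\bar 1}$ with each summand of rank one. Indeed, if, say, $V^{\bar 1}=0$, then $G_j\ _\la V^{\bar 0}=0$, and applying $[G_{i\ \la}G_{-i}]_{\la+\mu}=2L_0\ _{\la+\mu}$ to any element of $V^{\bar 0}$ would force $L_0$ to act as zero, contradicting nontriviality. Writing $V^{\bar 0}=\bigoplus_{i\in\Z}\C[\pa]x_i$ and $V^{\bar 1}=\bigoplus_{i\in\Z}\C[\pa]y_i$, Lemma~\ref{lemm-6.1} leaves four possibilities according to whether $V^{\bar 0}$ (respectively $V^{\bar 1}$) is of continuous type $V_{a,b}$ or of discrete type $V_{A,b}$.

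Next I would encode the $G$-action by setting $G_i\ _\la x_j=g_{i,j}(\pa,\la)y_{i+j}$ and $G_i\ _\la y_j=h_{i,j}(\pa,\la)x_{i+j}$, and extract constraints from two families of compatibility identities. The relations $[L_{i\ \la}G_j]_{\la+\mu}x_k=(\pa+\tfrac32\la)G_{i+j\ \la+\mu}x_k$ and their analogues on $y_k$ propagate the polynomials across the $\Z$-grading, and in particular force the scalar $b$ to be common to both parts; the relations $[G_{i\ \la}G_j]_{\la+\mu}x_k=2L_{i+j\ \la+\mu}x_k$ and their analogues on $y_k$ couple $g_{i,j}$ to $h_{i,j}$ and determine them explicitly. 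When both parts are of continuous type, with $V^{\bar 0}\cong V_{a,b}$ and $V^{\bar 1}\cong V_{a',b}$, a verbatim adaptation of the computation in the proof of Theorem~\ref{freeofranktwo} forces either $a'=a-\tfrac12$ with $V\cong M_{a,b}$, or $a'=a+\tfrac12$ with $V\cong M^\prime_{a,b}$.

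The subtler situations involve the discrete type. If exactly one part, say $V^{\bar 0}$, is of type $V_{A,b}$, then the piecewise $L_0$-action on $x_j$ must be reproduced by the sum of products $g_{i,-i+j}(\pa+\la,\mu)h_{-i,j}(\pa,\la)+g_{-i,i+j}(\pa+\mu,\la)h_{i,j}(\pa,\mu)$, and a $\pa$-degree comparison together with a check on $(a_j,a_{i+j})\in\{0,1\}^2$ forces $V^{\bar 1}\cong V_{\frac12,b}$; the resulting $g_{i,j},h_{i,j}$ then match the defining formulas of $M_{A,b}$ or $M^\prime_{A,b}$, depending on which part is discrete. The both-discrete possibility is then ruled out by the same $\pa$-degree comparison, since neither side would supply the linear factor in $\pa$ required by $[G_{i\ \la}G_{-i}]_{\la+\mu}=2L_0\ _{\la+\mu}$. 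The main obstacle will be precisely this piecewise analysis: one has to organize the case check on $(a_j,a_{i+j})$ so that the four combinatorial subcases assemble uniformly in $i,j$ into the defining formulas of $M_{A,b}$ and $M^\prime_{A,b}$ without forcing further restrictions on the sequence $A$.
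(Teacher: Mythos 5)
Your overall route is the same as the paper's: show both parts are nontrivial, invoke Lemma \ref{lemm-6.1} to reduce to the four type combinations for $(V^{\bar0},V^{\bar1})$, and then use the identities coming from $[L_{i\ \la}G_j]_{\la+\mu}$ and $[G_{i\ \la}G_j]_{\la+\mu}$ to pin down the polynomials $g_{i,j},h_{i,j}$. The continuous--continuous case and the mixed cases are handled in your sketch essentially as in the paper, and your nontriviality argument for the two parts is fine.

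The genuine gap is your exclusion of the case where both $V^{\bar0}=V_{A,b}$ and $V^{\bar1}=V_{B,\beta}$ are of discrete type. A $\pa$-degree comparison on $[G_{i\ \la}G_{-i}]_{\la+\mu}x_k=2L_{0\ \la+\mu}x_k$ does \emph{not} produce a contradiction: setting $i=j$ and $\la=\mu$ in the $[G,G]$ identity gives $g_{i,k}(\pa+\la,\la)h_{i,i+k}(\pa,\la)=f_{2i,k}(\pa,2\la)$ with $\deg_\pa f_{2i,k}\in\{0,1,2\}$ in the discrete case, and the degrees of $g$ and $h$ can always be distributed so as to match ($1+0$, $1+1$, or $0+0$); the ``linear factor in $\pa$'' you appeal to is supplied without difficulty, so degree counting alone leaves this case open. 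The paper kills it by a different and finer mechanism. From $[L_{0\ \la}G_j]_{\la+\mu}x_k=(\pa+\frac32\la)G_{j\ \la+\mu}x_k$ at $\mu=0$ one obtains
\[
g_{j,k}(\pa+\la,0)(\pa+t_k\la+b)-g_{j,k}(\pa,0)(\pa+r_k\la+b)=\tfrac12\la\, g_{j,k}(\pa,\la),
\]
where $r_k,t_k\in\{0,1\}$ are the exponents of the two discrete modules; letting $\la\to0$ shows that $\phi(\pa)=g_{j,k}(\pa,0)$ satisfies the ODE $2(\pa+b)\phi'=(1-2(r_k-t_k))\phi$, whose nonzero solutions are $c(\pa+b)^{(1-2(r_k-t_k))/2}$ --- a half-integer power of $\pa+b$, hence never a nonzero polynomial. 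Equivalently, a leading-coefficient count on this identity forces $\deg_\pa g_{j,k}(\pa,0)=\frac12\pm(r_k-t_k)\notin\Z$. The decisive ingredient is thus the coefficient $\frac32$ in the $[L,G]$ bracket clashing with the integrality of $r_k,t_k$, not a degree count in $[G,G]=2L$; you would need to supply this step (or an equivalent) to close the argument.
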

\begin{proof}Note that both $V^{\bar0}$ and  $V^{\bar1}$ are nontrivial by the remark after Theorem \ref{freeofranktwo}, hence are nontrivial free $\Z$-graded  $\mathfrak{clv}$-modules.  Let $\{x_i\mid i\in\Z\}$ and  $\{y_i\mid i\in\Z\}$ be $\C[\pa]$-bases of   $V_{\bar{0}}$
and  $V_{\bar{1}}$.  Then by Lemma \ref{lemm-6.1}, $V^{\bar0}=V_{a,b}$ or $V^{\bar0}=V_{A,b}$ for some $a,b\in\C$ and $A\in\prod_{i\in\Z}\Z_2$. Assume that
\begin{eqnarray*}
&&L_i\ _\lambda x_j=f_{i,j}(\partial,\lambda)x_{i+j}, \quad G_i\ _\lambda x_j=g_{i,j}(\partial,\lambda)y_{i+j},\\
&&L_i\ _\lambda y_j=f^\prime_{i,j}(\partial,\lambda)y_{i+j}, \quad G_i\ _\lambda y_j=g^\prime_{i,j}(\partial,\lambda)x_{i+j}
\end{eqnarray*} for any $i,j,k\in\Z$ and some $f(\partial,\lambda), f^\prime(\pa,\lambda), g(\pa,\lambda), g^\prime(\pa,\lambda)\in\C[\pa,\lambda]$. It follows from $[L_i\ _\lambda G_j]_{\lambda+\mu}x_k=(\pa+\frac32\lambda)G_{i+j}\ _{\lambda+\mu}x_k$ that
\begin{eqnarray}
&&g_{j,k}(\pa+\lambda,\mu)f^{\pr}_{i,j+k}(\pa,\lambda)-g_{j,i+k}(\pa,\mu)f_{i,k}(\pa+\mu,\lambda)\nonumber\\
&=&(\frac12\lambda-\mu)g_{i+j,k}(\pa,\lambda+\mu),\forall i,j,k\in\Z.\label{m5.6}
\end{eqnarray}

\begin{clai}  If
$V^{\bar0}=V_{a,b}$  as $\mathfrak{clv}$-modules, then $V^{\bar1}=V_{a-\frac12,b}$, $V_{a+\frac12,b}$ or $V_{A,b}\ (a=\frac12)$. In this  case $V=M_{a,b}$, $M^\prime_{a,b}$ or $M_{A,b}$.
\end{clai}  Assume that $V^{\bar0}=V_{a,b}$.
This entails us to write explicitly the expressions of $f_{i,j}(\partial,\lambda)$ as:  $$f_{i,j}(\partial,\lambda)=\pa+a\lambda+b.$$ It follows from  $[G_i\ _\lambda G_j]_{\lambda+\mu}x_k=2L_{i+j}\ _{\lambda+\mu}x_k$ that
\begin{eqnarray}
g_{j,k}(\pa+\lambda, \mu)g^{\pr}_{i,j+k}(\pa,\lambda)+g_{i,k}(\pa+\mu,\lambda)g^{\pr}_{j,i+k}(\pa,\mu)=2\big(\pa+a(\lambda+\mu)+b\big),\ \forall\ i,j\in\Z.\label{m5.5}
\end{eqnarray}
Setting $j=i$ and $\mu=\lambda$ in  (\ref{m5.5}) gives
\begin{equation}\label{m5.4-1}
g_{i,k}(\pa+\lambda,\lambda)g^{\pr}_{i,i+k}(\pa,\lambda)=\pa+2a\lambda+b,\ \forall i, k\in\Z,
\end{equation}
from which it is not hard to see that for any $i,k\in\Z,$\begin{eqnarray} &{\rm either}\   g_{i,k}(\pa,\lambda):=\mathfrak g_{i,k} \ {\rm is\ a\ constant} \ {\rm and}\  g^{\pr}_{i,i+k}(\pa,\lambda)=\mathfrak g_{i,k}^{-1}(\pa+2a\lambda+b)\nonumber\\
& {\rm or}\ g^\prime_{i,i+k}(\pa,\lambda):= \mathfrak g_{i,i+k}\ {\rm  is\ a\ constant}\  {\rm and}\ g_{i,k}(\pa,\lambda)={\mathfrak g^\prime}^{-1}_{i,i+k}(\pa+(2a-1)\lambda+b).\label{analysis}\end{eqnarray}

\begin{case}\label{c-1}
As $\mathfrak{clv}$-modules, $V^{\bar1}=V_{A,\beta}$  for some $A\in\prod_{i\in\Z}\Z_2\setminus\{\{0\}_{i\in\Z},\{1\}_{i\in\Z}\}$ and $\beta\in\C$.  Then $a=\frac12, \beta=b$ and $V=M_{A,b}$.
\end{case}Note that there are four different expressions for $f^\prime_{i,j}(\pa,\lambda).$  We will check it case by case.

\noindent{(a)}\ There exist some $p,q\in\Z$ such that $f^\prime_{p,q}(\partial,\lambda)=(\partial+\beta)(\partial+\lambda+\beta)$.

Choosing $i=p$,  $k=q-j$  and using $f_{i,k}(\partial,\lambda)=\pa+a\lambda+b$ in \eqref{m5.6} we have
\begin{eqnarray}
&&g_{j,q-j}(\pa+\lambda,\mu)(\partial+\beta)(\partial+\lambda+\beta)-g_{j,p+q-j}(\pa,\mu)(\pa+\mu+a\lambda+b)\nonumber\\
&=&(\frac12\lambda-\mu)g_{p+j,q-j}(\pa,\lambda+\mu), \quad\forall j\in\Z.\label{m5.6-1}
\end{eqnarray}Combining this with \eqref{analysis}, then we must have\begin{eqnarray*}& \beta=b, a=\frac12,  \mathfrak g_{j,q-j}=g_{j, q-j}(\pa+\lambda,\mu)\in\C^*,\\ & g_{j,p+q-j}(\pa,\mu)=\mathfrak g_{j,q-j}(\pa+b) \ {\rm and}\    g_{p+j,q-j}(\pa,\lambda+\mu)=\mathfrak g_{j,q-j}(\partial+b).\end{eqnarray*}
It can be also observed from \eqref{m5.6-1} that all the leading coefficients of  $g_{k,l}(\pa,\lambda)$ are equal (here we regard $g_{k,l}(\pa,\lambda)$ as polynomials in $\partial$ with coefficients in $\C[\lambda]$). By rescaling the basis $\{y_i\mid i\in\Z\}$ we may assume that all the leading coefficients of  $g_{k,l}(\pa,\lambda)$ are equal 1. Hence, \begin{eqnarray*} g_{j, q-j}(\pa,\lambda)=1\quad  {\rm and}\quad g_{j,p+q-j}(\pa,\lambda)=\pa+b,\quad \forall j\in\Z,    \end{eqnarray*} which in turn by using \eqref{analysis} forces $$g_{j, q}^\prime(\pa,\lambda)=\pa+\lambda+b\quad {\rm and}\quad g^\prime_{j, p+q}(\pa,\lambda)=1,\quad\forall j\in\Z.$$ In particular, $$g_{p,q}(\pa,\lambda)=\pa+b\quad{\rm and}\quad g^\prime_{p,q}(\pa,\lambda)=\pa+\lambda+b.$$
Similarly, for any $j\in\Z$ we have:

\noindent{(b)}\ $g_{p,q}(\pa,\lambda)=\pa+b, g^\prime_{p,q}(\pa,\lambda)=1$ if $f^\prime_{p,q}(\partial,\lambda)=\pa+b$;\\
\noindent{(c)}\ $g_{p,q}(\pa,\lambda)=1, g^\prime_{p,q}(\pa,\lambda)=\pa+\lambda+b$ if $f^\prime_{p,q}(\partial,\lambda)=\pa+\lambda+b$;\\
\noindent{(d)}\ $g_{p,q}(\pa,\lambda)= g^\prime_{p,q}(\pa,\lambda)=1$ if $f^\prime_{p,q}(\partial,\lambda)=1$.\\ This is exactly the module $M_{A,b}$, completing Case \ref{c-1}.

 \begin{case}
As $\mathfrak{clv}$-modules $V^{\bar1}=V_{\alpha,\beta}$ for some $\alpha,\beta\in\C$. Then $\alpha=a\pm \frac12$, $\beta=b$ and $V=M_{a,b}$ or $M^\prime_{a,b}$.
 \end{case}
In this case $f_{i,j}^\prime(\pa,\lambda)=\pa+\alpha\lambda+\beta$. So \eqref{m5.6} now becomes \begin{eqnarray*}
&&g_{j,k}(\pa+\lambda,\mu)(\pa+\alpha\lambda+\beta)-g_{j,i+k}(\pa,\mu)(\pa+\mu+a\lambda+b)\nonumber\\
&=&(\frac12\lambda-\mu)g_{i+j,k}(\pa,\lambda+\mu),\forall i,j,k\in\Z.
\end{eqnarray*}Observe from this formula  that the coefficients of $\pa$  in  $g_{j,i+k}(\pa+\lambda,\lambda)$, $g_{j,k}(\pa,\mu)$ and $g_{i+j,k}(\pa,$ $\lambda+\mu)$ are equal, which may be assumed to be 1 for convenience.
Moreover, the degree of $g_{j,k}(\pa,\lambda)$ for any $j,k\in\Z$ is a constant, i.e, deg$_\pa g_{j,k}(\pa,\lambda)=1$ for all $j,k\in\Z$ or   deg$_\pa g_{j,k}(\pa,\lambda)=0$ for all $j,k\in\Z.$ In the former case, $$g_{j,k}(\pa,\lambda)=\pa+(2a-1)\lambda+b, g^\prime_{j,k}(\pa,\lambda)=1, \beta=b\quad {\rm  and}\quad \alpha=a-\frac12,$$ that is, $V=M_{a,b}$; while in the later case, $$g_{j,k}(\pa,\lambda)=1, g^\prime_{j,k}(\pa,\lambda)=\pa+2a\lambda+b, \beta=b\quad{\rm and}\quad \alpha=a+\frac12,$$ that is, $V=M_{a,b}^\prime$.

\begin{clai}\label{clai--2}  The case $V^{\bar{0}}=V_{A,b}$ and $V^{\bar{1}}= V_{B,\beta}$ for some $A,B\in\prod_{i\in\Z}\Z_2\setminus\{\{0\}_{i\in\Z},\{1\}_{i\in\Z}\}$ and $b, \beta\in\C$ can not occur.\end{clai}
Suppose on the contrary that $V^{\bar{0}}= V_{A,b}$ and $V^{\bar{1}}= V_{B,\beta}$. Choosing $i=\mu=0$ in (\ref{m5.6}), one has
\begin{eqnarray*}
g_{j,k}(\pa+\lambda,0)f^{\pr}_{0,k}(\pa,\lambda)-g_{j,k}(\pa,0)f_{0,k}(\pa,\lambda)=\frac12\lambda g_{j,k}(\pa,\lambda).
\end{eqnarray*} While  by definition of the actions of $\mathfrak{clv},$
\begin{eqnarray*}\label{m5.07}f_{0,k}(\pa,\lambda)=\pa+r_k\lambda+b,\ f^{\prime}_{0,k}(\pa,\lambda)=\pa+t_k\lambda+\beta,\quad{\rm where}\ r_k,t_k\in\Z_2.\end{eqnarray*}
Combining these two formulae gives
\begin{eqnarray}\label{m5.8}
g_{j,k}(\pa+\lambda,0)(\pa+t_k\lambda+b^{\prime})-g_{j,k}(\pa,0)(\pa+r_k\lambda+b)=\frac12\lambda g_{j,k}(\pa,\lambda).
\end{eqnarray}
Then $b^{\prime}=b$, by choosing $\lambda=0$. Now we put (\ref{m5.8}) in the following form:
\begin{eqnarray*}
 g_{j,k}(\pa,\lambda)=2(\pa+b)\frac{g_{j,k}(\pa+\lambda,0)-g_{j,k}(\pa,0)}{\lambda}+2rg_{j,k}(\pa+\lambda,0)-2tg_{j,k}(\pa,0).
\end{eqnarray*}Taking the operation $\lim_{\lambda\rightarrow 0}$ on both sides of the above formula we see that $g_{j,k}(\partial,0)$ is  solutions of the ordinary  differential equation
$$2(\pa+b)\frac{d\phi}{d\pa}=(1-2(r_k-t_k))\phi .$$
It is well-known that the general solution of this differential equation is of the form $c(\pa+b)^{\frac{1-2(r_k-t_k)}{2}}$ with $c\in\C$.
 Thus  $g_{j,k}(\pa,0)\in\C[\pa]\cap\C(\pa+b)^{\frac{1-2(r_k-t_k)}{2}}=0$, since $\frac{1-2(r_k-t_k)}{2}$ is  not an integer.
Hence, $g_{j,k}(\pa,0)=0$  and thereby $g_{j,k}(\pa,\lambda)=0$ by (\ref{m5.8}), a contradiction.

\begin{clai}
If $V^{\bar0}=V_{A,b}$ for some $A\in\prod_{i\in\Z}\Z_2\setminus\{\{0\}_{i\in\Z},\{1\}_{i\in\Z}\}$ and $b\in \C$, then $V^{\bar1}=V_{\frac12,b}$ and $V=M^\prime_{A,b}$.
\end{clai}
By Claim \ref{clai--2},  $V^{\bar1}=V_{\alpha,\beta}$ as $\mathfrak{clv}$-modules for some $\alpha,\beta\in\C$. Now it follows from proof of Case \ref{c-1} that $V^{\bar1}=V_{\frac12,b}$ and thereby $V=M^\prime_{A,b}$.
\end{proof}

The irreducible submodules of nontrivial free $\Z$-graded $\mathfrak{cls}$-modules of rank two can be characterized in the proposition.
\begin{prop}We have the following results:
\begin{itemize}
\item[{\rm (1)}]Any   nontrivial  irreducible $\mathfrak{cls}$-submodule of $M_{a,b}$ has the form \begin{eqnarray*}
&\bigoplus_{k\in\Z}\C[\pa](\pa+b)\sum_{i\in I}c_ix_{i+k}\oplus \bigoplus_{k\in\Z}\C[\pa]\sum_{i\in I}c_iy_{i+k}\quad {\rm if}\ a=0,\\
&\bigoplus_{k\in\Z}\C[\pa]\sum_{i\in I}c_ix_{i+k}\oplus \bigoplus_{k\in\Z}\C[\pa](\pa+b)\sum_{i\in I}c_iy_{i+k}\quad {\rm if}\ a=\frac12,\\
&\bigoplus_{k\in\Z}\C[\pa]\sum_{i\in I}c_ix_{i+k}\oplus \bigoplus_{k\in\Z}\C[\pa]\sum_{i\in I}c_iy_{i+k}\quad {\rm if}\ a\neq 0\ {\rm and}\  a\neq \frac12,
\end{eqnarray*}where $I$ is a finite subset of $\Z$ and $(c_i)_{i\in I}$ is a sequence of complex numbers;
\item[{\rm (2)}]Any   nontrivial  irreducible $\mathfrak{cls}$-submodule of $M^\prime_{a,b}$ has the form \begin{eqnarray*}
&\bigoplus_{k\in\Z}\C[\pa](\pa+b)\sum_{i\in I}c_ix_{i+k}\oplus \bigoplus_{k\in\Z}\C[\pa]\sum_{i\in I}c_iy_{i+k}\quad {\rm if}\ a=0,\\
&\bigoplus_{k\in\Z}\C[\pa]\sum_{i\in I}c_ix_{i+k}\oplus \bigoplus_{k\in\Z}\C[\pa](\pa+b)\sum_{i\in I}c_iy_{i+k}\quad {\rm if}\ a=-\frac12,\\
&\bigoplus_{k\in\Z}\C[\pa]\sum_{i\in I}c_ix_{i+k}\oplus \bigoplus_{k\in\Z}\C[\pa]\sum_{i\in I}c_iy_{i+k}\quad {\rm if}\ a\neq 0\ {\rm and}\  a\neq -\frac12,
\end{eqnarray*}where $I$ is a finite subset of $\Z$ and $(c_i)_{i\in I}$ is a sequence of complex numbers;
\item[{\rm (3)}]Any   nontrivial  irreducible $\mathfrak{cls}$-submodule of $M_{A,b}$ has the form
$$\bigoplus_{k\in\Z}\C[\pa]\sum_{i\in I}c_i\delta_{i+k}(\pa)y_{i+k}\oplus\bigoplus_{k\in\Z}\C[\pa]\sum_{i\in I}c_iy_{i+k} ,$$
where $I$ is a finite subset of $\Z$,  $(c_i)_{i\in I}$ is a sequence of complex numbers and $$
\delta_i(\pa)=\begin{cases}(\pa+b),&\mbox{ if}\ a_i=0,\\
1,&\mbox{ if}\ a_i=1;\\
\end{cases}$$

\item[{\rm (4)}]Any   nontrivial  irreducible $\mathfrak{cls}$-submodule of $M^\prime_{A,b}$ has the form
$$\bigoplus_{k\in\Z}\C[\pa]\sum_{i\in I}c_i\delta_{i+k}(\pa)x_{i+k}\oplus \bigoplus_{k\in\Z}\C[\pa]\sum_{i\in I}c_iy_{i+k},$$
where $I$ is a finite subset of $\Z$,  $(c_i)_{i\in I}$ is a sequence of complex numbers and $\delta_i(\pa)$ is defined as in (3).

\end{itemize}
\end{prop}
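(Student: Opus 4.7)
The plan is to handle each of the four parts in parallel, exploiting the fact that a $\mathfrak{cls}$-submodule $W$ of any of the listed modules automatically inherits both the $\Z_2$-grading and the $\Z$-grading, and then reducing everything to the already-known classification of rank-one $\Z$-graded $\mathfrak{clv}$-submodules of $V_{a,b}$ and $V_{A,b}$ from \cite{WCY} together with the explicit $G_i$-actions built into the definitions of $M_{a,b}$, $M'_{a,b}$, $M_{A,b}$, $M'_{A,b}$.

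First I would write $W = W^{\bar 0} \oplus W^{\bar 1}$ along the $\Z_2$-grading, observe that $W^{\bar 0}$ and $W^{\bar 1}$ are $\Z$-graded $\mathfrak{clv}$-submodules of the corresponding rank-one free $\Z$-graded $\mathfrak{clv}$-modules, and invoke the classification (see \cite{WCY} and the submodule analysis underlying Lemma \ref{lemm-6.1}) to conclude that any $\mathfrak{clv}$-submodule of $V_{a,b}$ (respectively $V_{A,b}$) at grade $k$ is $\C[\partial]$-spanned by elements of the form $\sum_{i \in I} c_i v_{i+k}$ (respectively $\sum_{i \in I} c_i \delta_{i+k}(\partial) v_{i+k}$) for a finite $I \subseteq \Z$ and scalars $(c_i)_{i \in I}$, possibly multiplied by a fixed prefactor $(\partial+b)$ dictated by Proposition \ref{prop-5.1}(i). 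Irreducibility forces the finite index set $I$ and the coefficients $(c_i)_{i \in I}$ appearing in $W^{\bar 0}$ and $W^{\bar 1}$ to be the same across all $k$.

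The second step is to couple $W^{\bar 0}$ and $W^{\bar 1}$ through $G_i$. For $M_{a,b}$ with $a \notin \{0, \tfrac12\}$, for instance, starting from $w_k = \sum_{i \in I} c_i x_{i+k} \in W^{\bar 0}$ I would compute $G_0\,_\lambda w_k = (\partial + (2a-1)\lambda + b) \sum_i c_i y_{i+k}$ and $G_0\,_\lambda(\sum c_i y_{i+k}) = \sum c_i x_{i+k}$; these relations pin down precisely the pair $(\sum c_i x_{i+k}, \sum c_i y_{i+k})$ as a generating pair, and no proper pieces inside these $\C[\partial]$-spans are $G_i$-stable because $\C[\partial]x$ is already irreducible as a $\mathfrak{clv}$-module (compare the argument in the proof of Proposition \ref{prop-5.1}(i)). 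The limiting cases $a = 0$ in $M_{a,b}$ and $a = 0,-\tfrac12$ in $M'_{a,b}$ are handled by the same computation, with the $(\partial+b)$ prefactor surviving on the degenerate side as required.

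The main obstacle is case (3) and case (4), where the prefactor $\delta_{i+k}(\partial)$ depends on $a_{i+k}$ and thus varies across the finite support of a single generator. Applying $G_i$ mixes contributions with different $a_j$-values according to the four-line definition of the $G_i$-action in $M_{A,b}$ and $M'_{A,b}$, and one must verify that the elementary sum $\sum_i c_i \delta_{i+k}(\partial) y_{i+k}$ paired with $\sum_i c_i y_{i+k}$ (for part (3)) is honestly $G_i$-stable and minimal. Concretely, I would check termwise that applying $G_i\,_\lambda$ to $\sum c_j y_{j+k}$ produces $\sum c_j \delta_{j+i+k}(\partial) x_{j+i+k}$ and that applying $G_i$ again returns to the prescribed shape, using the case distinctions $(a_{j+k}, a_{j+i+k}) \in \{(0,0),(0,1),(1,0),(1,1)\}$. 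Once stability and minimality are verified in every case, irreducibility of each listed submodule follows by the same generator-chasing argument as in Proposition \ref{prop-5.1}, and conversely any nontrivial irreducible submodule is forced into exactly one of the four stated forms.
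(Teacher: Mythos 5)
Your opening reduction contains a false step that the rest of the argument leans on: you assert that a $\mathfrak{cls}$-submodule $W$ ``automatically inherits both the $\Z_2$-grading and the $\Z$-grading.'' For the $\Z$-grading this is not just unproved but actually contradicted by the statement you are trying to prove: the irreducible submodules in all four parts are generated by elements such as $\sum_{i\in I}c_i x_{i+k}$, which mix the graded components $\C[\pa]x_{i+k}$ for the various $i\in I$ and hence are \emph{not} $\Z$-graded submodules whenever $|I|>1$. Consequently $W^{\bar 0}$ and $W^{\bar 1}$ are not $\Z$-graded $\mathfrak{clv}$-submodules of $V_{a,b}$ or $V_{A,b}$, and Lemma \ref{lemm-6.1} (which classifies free $\Z$-graded $\mathfrak{clv}$-\emph{modules}, not submodules) gives you nothing here. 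Even the $\Z_2$-splitting $W=W^{\bar0}\oplus W^{\bar1}$ requires an argument: the paper does not assume it, but instead takes an arbitrary nonzero $u_0=\sum_{z\in{\rm Supp}(u_0)}f_z(\pa)z$ with $z$ ranging over both $x$'s and $y$'s and \emph{derives} that a minimal-support element lies entirely in the even (or entirely in the odd) part.

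The missing idea is precisely the mechanism that produces a generator with constant coefficients. The paper chooses $u_0\neq 0$ of minimal support, normalizes one coefficient to $1$ using irreducibility of $\C[\pa]z'$ over $\C[\pa]L_0$, and then applies the operator $L_0{}_\lambda(\cdot)|_{\lambda=c_1}-L_0{}_\lambda(\cdot)|_{\lambda=c_2}-a(c_1-c_2)(\cdot)$, which kills the normalized component; minimality of the support then forces the resulting element to vanish identically, and the ensuing polynomial identities force every $f_z$ with $z\in X\cap{\rm Supp}(u_0)$ to be a nonzero constant and every $f_z$ with $z\in Y\cap{\rm Supp}(u_0)$ to vanish. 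This is where the finite set $I$ and the constants $(c_i)_{i\in I}$ come from, and it is the step your proposal replaces with an appeal to a classification that does not apply. Your second step (coupling the even and odd parts through the $G_i$-action and checking the $\delta_{i+k}(\pa)$ bookkeeping in cases (3) and (4)) is consistent with what the paper does once a generator of the right shape is in hand, but without the minimal-support argument the proof does not get off the ground.
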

\begin{proof}
Here we only give the proofs of (1) and (3).  Denote by $X$ and $Y$ the sets $\{x_i\mid i\in\Z\}$ and $\{y_i\mid i\in\Z\},$ respectively. Let $S_{a,b}$ be a nontrivial irreducible $\mathfrak{cls}$-submodule.
For any $0\neq u\in S_{a,b}$,  there exists a finite subset ${\rm Supp}(u)$ of $X\cup Y$ such that $u\in \bigoplus_{z\in {\rm Supp}(u)}\C[\pa]z$.  Take $0\neq u_0\in S_{a,b}$ with $\sharp {\rm Supp}(u_0)$  minimal.  Then we can write  $u_0=\sum_{z\in I_{u_0} }f_z(\pa)z$ for some $f_z(\pa)\in\C[\pa]\setminus\{0\}$.

 (1)\ Consider the case $a\neq0$.  Without loss of generality, we assume that the set ${\rm Supp}(u_0)\cap\{x_i\mid i\in\Z\}$ is nonempty. Fix $z^\prime\in  X\cap {\rm Supp}(u_0).$ By the irreducibility of  $\C[\pa]z^\prime$  as $\C[\pa] L_0$-module \cite{DK},  we may assume that $f_{z^\prime}(\partial)=1.$  Consider the elements \begin{eqnarray*}&&u_0^\prime(c_1,c_2)=L_0\ _\lambda(u_0)|_{\lambda=c_1}-L_0\ _\lambda(u_0)|_{\lambda=c_2}-a(c_1-c_2)u_0\\
 &=&\sum_{z^\prime\neq z\in X\cap {\rm Supp}(u_0)}\Big(f_z(\pa+c_1)(\pa+ac_1+b)-f_z(\pa+c_2)(\pa+ac_2+b)-a(c_1-c_2)f_z(\pa)\Big)z\\
  &&+\sum_{ z\in Y\cap {\rm Supp}(u_0)}\Big(f_z(\pa+c_1)\big(\pa+(a-\frac12)c_1+b\big)-f_z(\pa+c_2)\big(\pa+(a-\frac12)c_2+b\big)\\
  &&~~~~~~~~~~~~~~~~~~~~~~-a(c_1-c_2)f_z(\pa)\Big)z\in S_{a,b}\end{eqnarray*}  where $c_i\in\C.$  Now the minimality of $ {\rm Supp}(u_0)$ implies that $u_0^\prime(c_1,c_2)=0,$ namely, for any $c_i\in\C$ we have
  \begin{eqnarray*}0&=&f_z(\pa+c_1)(\pa+ac_1+b)-f_z(\pa+c_2)(\pa+ac_2+b)\\ &&-a(c_1-c_2)f_z(\pa),\quad \forall z\in X\cap {\rm Supp}(u_0)
 \end{eqnarray*} and \begin{eqnarray*}0&=&f_z(\pa+c_1)(\pa+(a-\frac12)c_1+b)-f_z(\pa+c_2)(\pa+(a-\frac12)c_2+b)\\ &&-a(c_1-c_2)f_z(\pa), \quad\forall z\in Y\cap {\rm Supp}(u_0).\end{eqnarray*} Hence, $0\neq c_z:=f_z(\pa)\in\C\ (c_{z^\prime}=1)$ for each $ z\in X\cap {\rm Supp}(u_0)$ and $f_z(\pa)=0$ for each $z\in Y\cap {\rm Supp}(u_0)$. By the minimality of ${\rm Supp}(u_0)$ again, $Y\cap {\rm Supp}(u_0)$ is empty. That is,  $u_0=\sum_{z\in X\cap {\rm Supp}(u_0)}c_zz\in S_{a,b}.$ Set $I_{u_0}=\{i\mid x_i\in X\cap{\rm Supp}(u_0)\}.$ Then $u_0$ can be written as $u_0=\sum_{i\in I_{u_0}}c_ix_i$ with $c_i=c_{x_i}$. Now one can see that \begin{eqnarray*} S_{a,b}=\bigoplus_{k\in\Z}\C[\pa]\sum_{i\in I_{u_0}}c_ix_{i+k}\oplus \bigoplus_{k\in\Z}\C[\pa](\pa+b)\sum_{i\in I_{u_0}}c_iy_{i+k}\quad &{\rm if}&\ a=\frac12\\
 {\rm and}\ \ \ \ \ S_{a,b}=\bigoplus_{k\in\Z}\C[\pa]\sum_{i\in I_{u_0}}c_ix_{i+k}\oplus \bigoplus_{k\in\Z}\C[\pa]\sum_{i\in I_{u_0}}c_iy_{i+k}\quad &{\rm if}&\ a\neq \frac12.\end{eqnarray*}

Let us assume that $a=0$ and in this case we start with  the nonempty set $Y\cap {\rm Supp}(u_0)$.  Similarly, one can show that $$S_{a,b}=\bigoplus_{k\in\Z}\C[\pa](\pa+b)\sum_{i\in I_{u_0}}c_ix_{i+k}\oplus \bigoplus_{k\in\Z}\C[\pa]\sum_{i\in I_{u_0}}c_iy_{i+k}.$$

(3)\ As for the case $a\neq 0$ in the proof of (1), $u_0=\sum_{i\in I_{u_0}}c_ix_i\in S_{a,b}.$ Then it is not hard to see that $$S_{a,b}=\bigoplus_{k\in\Z}\C[\pa]\sum_{i\in I_{u_0}}c_ix_{i+k}\oplus \bigoplus_{k\in\Z}\C[\pa]\sum_{i\in I_{u_0}}c_i\delta_{i+k}(\pa)y_{i+k}.$$
\end{proof}
\section*{Acknowledgments}
This paper was supported by NSF grants 11371278,~11431010,~11101056,~11501417,~11161010,
the Fundamental Research Funds for the Central Universities of China,
Innovation Program of Shanghai Municipal Education Commission and Program for
Young Excellent Talents in Tongji University.

\end{CJK*}
\end{document}